\documentclass[11pt,a4paper]{article}

\usepackage[T1]{fontenc}
\usepackage{amsmath,amssymb,amsthm,mathtools,mathrsfs}
\usepackage[a4paper,margin=1in]{geometry}
\IfFileExists{newtxtext.sty}{%
  \usepackage{newtxtext,newtxmath}%
}{%
  \usepackage{mathptmx}%
}
\DeclareFontFamily{U}{rsfs}{\skewchar\font127}
\DeclareFontShape{U}{rsfs}{m}{n}{<-6> rsfs5 <6-8> rsfs7 <8-> rsfs10}{}
\usepackage{microtype}
\usepackage{xcolor}
\usepackage[colorlinks=true,linkcolor=red!75!black,citecolor=green!45!black,urlcolor=blue!65!black]{hyperref}

\hypersetup{
  pdftitle={On Integer-Division Bases for the Kalmar Elementary Functions},
  pdfauthor={Joseph M. Shunia},
  pdfsubject={Fixed-base incompleteness, variable-base completeness, and elementary predicates},
  pdfkeywords={arithmetic term, substitution basis, elementary function, integer division, exponentiation, geometric sparsity, elementary predicates}
}

\newcommand{\N}{\mathbb N}
\newcommand{\E}{\mathcal E}
\newcommand{\Pow}{\mathscr P}
\newcommand{\sg}{\operatorname{sg}}
\newcommand{\dotminus}{\mathbin{\dot-}}
\newcommand{\HW}{\operatorname{HW}}
\newcommand{\Pred}{\operatorname{Pred}}
\newcommand{\pred}{\operatorname{pred}}

\newtheorem{theorem}{Theorem}
\newtheorem{lemma}[theorem]{Lemma}
\newtheorem{corollary}[theorem]{Corollary}
\theoremstyle{remark}
\newtheorem{remark}[theorem]{Remark}

\allowdisplaybreaks[1]

\title{On Integer-Division Bases for the Kalm\'ar Elementary Functions}
\author{Joseph M. Shunia}
\date{October 2026}

\begin{document}
\maketitle

\begin{abstract}\noindent
Prunescu, Sauras-Altuzarra, and Shunia asked whether addition, totalized
integer division, and base-two exponentiation form a substitution basis for
the Kalm\'ar elementary functions. We answer this problem negatively, even
after adjoining multiplication. On independent powers of two, every fixed
term built from addition, multiplication, and integer division takes only
polylogarithmically many distinct values below a given bound, and the same
restriction survives arbitrary nesting of $2^x$. No such term recovers $n$
from $2^n$, so neither $\lfloor\log_2 x\rfloor$ nor integer remainder is
representable. The obstruction is not special to the base two. For every
fixed integer $b\geq 2$, the same sparsity bound holds with constants
independent of $b$, and
$\langle x+y,\lfloor x/y\rfloor,b^x\rangle$
remains a proper subclass of the elementary functions. Variable
exponentiation reverses the conclusion: addition, integer division, and
$x^y$ do form a substitution basis,
\[
  \langle x+y,\ \lfloor x/y\rfloor,\ x^y\rangle=\E.
\]
Once the base may depend on the input, integer remainder and truncated
subtraction become representable. The incomplete base-two class is
nevertheless complete for truth values. Every elementary characteristic
function belongs to it, as does every elementary function with a fixed
finite range.
\end{abstract}

\noindent\textbf{Keywords:} arithmetic term; substitution basis; elementary function; integer division; exponentiation; geometric sparsity; elementary predicate.

\medskip
\noindent\textbf{2020 Mathematics Subject Classification:} 03D20 (primary), 03D55, 03B70, 68Q15.

\section{Introduction}\label{sec:intro}

In this work, $\N$ denotes the set of non-negative integers. Given a set
$B$ of operations on $\N$, we write $\langle B\rangle$ for the closure under
substitution of $B$ together with the constant operations and projections.
We say that $B$ is a \emph{substitution basis} for a class $\mathcal G$ of
operations if $\langle B\rangle=\mathcal G$; see, for example,
Mazzanti~\cite[Section~2.2]{Mazzanti2002}. Thus only finite composition is
allowed: input-dependent iteration, search, and variable-length sums or
products are not term operations.

The Kalm\'ar elementary functions form the class $\mathcal E^3$ of the
Grzegorczyk hierarchy~\cite{Grzegorczyk1953}; following~\cite{PSS2026}, we
denote this class simply by $\E$. Equivalently, $\E$ is the smallest class
containing the constants, projections, addition, and truncated subtraction
$x\dotminus y=\max\{x-y,0\}$ and closed under substitution, bounded summation,
and bounded multiplication~\cite{Clote1999,Cutland1980}. The problem of
finding finite substitution bases for $\E$ goes back at least to
R\"odding~\cite{Rodding1964} and Marchenkov~\cite{Marchenkov1980}.
Jones and Matiyasevich~\cite{JonesMatiyasevich1982} studied Kalm\'ar
elementary relations in connection with exponential Diophantine
representations and obtained substitution bases for their characteristic
functions; Jones~\cite{Jones1989} later treated the function class itself.
Mazzanti proved the arithmetic basis~\cite{Mazzanti2002}
\[
  \langle x+y,\ x\dotminus y,\ xy,\ \lfloor x/y\rfloor,\ x^y\rangle=\E,
\]
and Marchenkov later obtained, among other results, the basis
~\cite{Marchenkov2007}
\begin{equation}\label{eq:marchenkov-basis}
  \langle x+y,\ x\bmod y,\ x^2,\ 2^x\rangle=\E.
\end{equation}

Prunescu, Sauras-Altuzarra, and Shunia~\cite{PSS2026} sharpened
\eqref{eq:marchenkov-basis} by eliminating squaring and proving the minimal
substitution basis
\begin{equation}\label{eq:remainder-basis}
  \langle x+y,\ x\bmod y,\ 2^x\rangle=\E.
\end{equation}
They also proved that integer division is representable in this basis
~\cite[Theorem~4]{PSS2026}. This naturally raises the converse question:
can integer division replace integer remainder? At the end of their paper
they stated this explicitly as Problem~15~\cite[Problem~15]{PSS2026}:
\begin{equation}\label{eq:open-problem}
  \text{Is }\quad
  \langle x+y,\ \lfloor x/y\rfloor,\ 2^x\rangle=\E\ ?
\end{equation}
We answer this problem negatively and show that replacing $2$ by any other
fixed integer base does not change that answer. Allowing the base itself
to vary does change it: addition, integer division, and binary
exponentiation $x^y$ form a substitution basis for $\E$. We also determine
the expressive power of the original base-two quotient class at the level
of predicates. It omits elementary numerical functions but represents
every elementary truth value.

Throughout, $\lfloor a/b\rfloor$ denotes integer division, totalized by the
convention $\lfloor a/0\rfloor=0$, as in~\cite{Mazzanti2002,PSS2026}.
We distinguish fixed-base exponentiation $b^a$ from the binary operation
$a^c$, for which we use $0^0=1$. Constants and projections are always
available. A fixed straight-line program may be
used in place of a term, since shared intermediate expressions can be
unfolded into a finite term tree.

We write
\[
  S=\langle x+y,\lfloor x/y\rfloor,2^x\rangle,
  \qquad
  \mathcal T=\langle x+y,\lfloor x/y\rfloor,x^y\rangle.
\]
For a class $\mathcal C$ of functions, $\Pred(\mathcal C)$ denotes the
predicates, of arbitrary finite arity, whose $\{0,1\}$-valued characteristic
functions belong to $\mathcal C$. Our three main results are as follows.

\begin{theorem}[Function incompleteness]\label{thm:main}
Multiplication belongs to $S$, and
\[
  S=\langle x+y,xy,\lfloor x/y\rfloor,2^x\rangle\subsetneq\E.
\]
No unary term $T\in S$ satisfies
\begin{equation}\label{eq:decoder}
  T(2^n)=n\qquad(n\in\N).
\end{equation}
In particular, neither integer logarithm nor integer remainder belongs to
$S$, and the answer to Problem~15 of Prunescu, Sauras-Altuzarra, and Shunia
is negative.
\end{theorem}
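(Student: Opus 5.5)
The proof has three parts. First we show that $xy\in S$. Then we prove a sparsity invariant for terms evaluated on powers of two, and we use it to rule out a decoder $T(2^n)=n$. The consequences for $\lfloor\log_2 x\rfloor$ and $x\bmod y$ then follow formally.

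\textbf{Multiplication.} We aim for the straight-line program
\[
  k=x+y,\qquad Q=\bigl\lfloor\lfloor 2^{3k}/x\rfloor/y\bigr\rfloor=\lfloor 2^{3k}/(xy)\rfloor,\qquad xy=\lfloor 2^{3k}/Q\rfloor .
\]
Here $2^{3k}=2^{k+k+k}$ is a term of $S$, and the nested-floor identity is standard.
\begin{itemize}
\item If $x,y\ge 1$, write $Q=2^{3k}/(xy)-\theta$ with $0\le\theta<1$. Then
\[
  \frac{2^{3k}}{Q}=\frac{xy}{1-\theta xy/2^{3k}},
\]
which lies in $[xy,\,xy+1)$ because $(xy)^2<2^{3k}$. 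Indeed $xy<2^{k}$, and the case $k\le 1$ is checked by hand.
\item If $x=0$ or $y=0$, the convention $\lfloor a/0\rfloor=0$ propagates and the output is $0$.
\end{itemize}
This shows $S=\langle x+y,xy,\lfloor x/y\rfloor,2^x\rangle$. The inclusion $S\subseteq\E$ is immediate, since each generator is elementary.

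\textbf{Sparsity.} The central step is a lemma of the following shape. For every term $t(x_1,\dots,x_m)$ of $S$ there is a constant $c_t$ such that, for every $N$,
\[
  \#\{\,t(2^{n_1},\dots,2^{n_m})<N : n_i\in\N\,\}\le c_t(\log N)^{c_t}.
\]
We prove it by structural induction on $t$. The induction hypothesis must be strengthened to describe the values themselves, not merely count them. Concretely, each value is a number whose binary expansion has $O(1)$ maximal blocks of ones or zeros, separated at positions that are bounded-size combinations of the $n_i$ and constants; this is roughly a piecewise normal form over a partition of $\N^m$ into $O(1)$ linear-order cells.
\begin{itemize}
\item Addition and multiplication preserve a bounded block count.
\item $2^x$ of such a value is either astronomically large or has the required form.
\item For a value below $N$, the block positions lie below $\log_2 N$. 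So the number of admissible patterns is polynomial in $\log N$.
\end{itemize}

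\textbf{Main obstacle.} The hard case is integer division. $\lfloor a/b\rfloor$ with $b$ a general sparse number produces a periodic expansion, as in $2^{k}/(2^j-1)$, whose block count is not bounded. I would first try to enlarge the invariant to \emph{eventually periodic} sparse expansions. These are described by $O(1)$ periods and $O(1)$ block boundaries with a bounded-size description, and so are still counted polylogarithmically below $N$. The task is then to check that $+$, $\times$, $\lfloor\cdot/\cdot\rfloor$ and $2^x$ preserve the larger class. If this closure fails, the fallback is a counting-only invariant that tracks the number of distinct length-$\ell$ outputs directly.

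\textbf{Conclusions.} Suppose a unary term $T\in S$ satisfied $T(2^n)=n$ for all $n$. Then $T$ would take all $N$ values below $N$ on powers of two, contradicting the polylogarithmic bound for large $N$; hence no such $T$ exists. Integer logarithm would give such a decoder by taking $T=\lfloor\log_2 x\rfloor$, so $\lfloor\log_2 x\rfloor\notin S$. If $x\bmod y$ belonged to $S$, then by \eqref{eq:remainder-basis} we would have $S=\E\ni\lfloor\log_2 x\rfloor$, a contradiction. Thus $x\bmod y\notin S$, and in particular $S\subsetneq\E$. This gives the negative answer to Problem~15.
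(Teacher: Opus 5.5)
\textbf{Where the gap is.} Your multiplication term and your closing deductions are sound and match the paper. The paper uses $2^{2x+2y+2}$ where you use $2^{3(x+y)}$, and it excludes remainder through the basis $\{+,\bmod,2^x\}$ exactly as you do. The gap is the sparsity lemma. It carries the whole theorem, and you leave it as a programme whose candidate invariants do not survive division.
\begin{itemize}
\item The bounded-block class fails at division, as you note yourself.
\item The eventually periodic enlargement fails as well. $(2^j-1)^2=2^{2j}-2^{j+1}+1$ has only three blocks. Yet $2^a/(2^j-1)^2=\sum_{i\geq1}i\,2^{a-(i+1)j}$ writes the integers $1,2,3,\ldots$ into consecutive $j$-bit blocks; this is the binary analogue of $1/81=0.\overline{012345679}$. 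Its period is therefore $j(2^j-1)$, and each period contains unboundedly many blocks. No description by $O(1)$ periods and $O(1)$ block boundaries exists.
\item The counting-only fallback fails for the reason the paper isolates. A small quotient $\lfloor U/V\rfloor$ can have arbitrarily large operands. So the number of quotient values below $N$ is not bounded by the numbers of operand values below any function of $N$.
\end{itemize}

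\textbf{The missing idea.} You need to separate small intermediate quotients from large ones.
\begin{itemize}
\item Suppose the root $\lfloor U/V\rfloor$ has value at most $M$. Take $H=100(s+1)(M+1)^2$, where $s$ is the number of operations, and freeze each topmost proper division node of value at most $H$ at its exact value.
\item Apply the induction hypothesis at level $H$ rather than $M$. It bounds the number of frozen value tuples by $O(g(M)^{K})$, because $g(H)=O(g(M))$.
\item Every surviving proper division now exceeds $H$. Replacing its floor by real division therefore costs a logarithmic error of at most $1/H$ per node (Lemma~\ref{lem:rounding}).
\item Consequently the real quotient $u/v$ at the root lies within $1/100$ of a positive rational function $P/Q$ of the dyadic inputs. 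This function has boundedly many monomials and coefficients polynomial in $M$.
\item Normalize $P$ and $Q$ by their leading monomials and discard monomials more than $O(\log M)$ binary places below them. What remains is a net of polylogarithmic size (Lemma~\ref{lem:rational}), and each net point accounts for at most three values of the floor.
\end{itemize}
You also do not need to carry $2^x$ through the induction. Replace the outermost exponential subterms by fresh variables ranging independently over powers of two. This only enlarges the range, and it leaves terms built from $+$, $\times$, and $\lfloor x/y\rfloor$ alone.
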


The same obstruction holds for every fixed integer base $b\geq2$.
Theorem~\ref{thm:fixed-base} proves the corresponding sparsity estimate
uniformly in $b$, and Corollary~\ref{cor:fixed-base-incomplete} gives the
negative basis result.

\begin{theorem}[Variable-base completeness]\label{thm:variable-basis}
Addition, totalized integer division, and binary exponentiation form a
substitution basis for the Kalm\'ar elementary functions:
\[
  \mathcal T=\langle x+y,\lfloor x/y\rfloor,x^y\rangle=\E.
\]
The translation from an elementary description to a $\mathcal T$-term is
effective. In particular, integer remainder and truncated subtraction
belong to $\mathcal T$.
\end{theorem}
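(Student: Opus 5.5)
The inclusion $\mathcal T\subseteq\E$ is immediate, since $x+y$, $\lfloor x/y\rfloor$ and $x^y$ are elementary and $\E$ is closed under substitution. For the reverse inclusion I plan to reduce to the remainder basis \eqref{eq:remainder-basis}. The base-two exponential is already a $\mathcal T$-term, namely $2^x=x^y|_{x:=2,\,y:=x}$. So it suffices to show that $x\bmod y\in\mathcal T$. Because $x\bmod y=x\dotminus y\lfloor x/y\rfloor$, this in turn reduces to two tasks: representing multiplication in $\mathcal T$, and representing truncated subtraction in $\mathcal T$. Every step below is an explicit term construction. Combined with the effective translation behind \eqref{eq:remainder-basis} in~\cite{PSS2026}, this gives the claimed effectiveness, and it exhibits both remainder and truncated subtraction as members of $\mathcal T$.

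\emph{Step 1: auxiliary operations.} Predecessor-type constants are obtained from the identity $\lfloor N^2/(N+1)\rfloor=N-1$ for $N\ge1$, which holds because $N^2=(N+1)(N-1)+1$. Squaring is available as $x^2$. Multiplication is obtained by packing both factors into one large base and reading off a digit with a single division. Put $B=2^{2(x+y)+4}$. Then
\[
(B+x)(B+y)=B^2+(x+y)B+xy,
\]
and since $xy<B$, the product $xy$ is the lowest base-$B$ digit. Rather than isolating that digit with a remainder, I will arrange it as the leading part of a quotient in which every other contribution is smaller than the divisor. For instance, I will try writing $(B+x)(B+y)$ as $\bigl((B+x+y)^2+\cdots\bigr)$-type expressions over powers of $B$ and comparing floors. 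The guiding principle is that a floor of a rational expression in $B$ recovers a coefficient exactly once the neighbouring coefficients are dominated by $B$.

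\emph{Step 2: truncated subtraction, the main obstacle.} Variable-base powers give the difference as an exponent:
\[
\lfloor B^x/B^y\rfloor=B^{\,x\dotminus y}\quad\text{for }x\ge y,
\]
and this quotient equals $0$ when $x<y$. Similarly, with $d=x-y$,
\[
\Bigl\lfloor \frac{(B+1)^x\,B^{y+1}}{(B+1)^y\,B^x}\Bigr\rfloor=B+d
\]
for $B$ large compared with $2^{x+y}$, because $(B+1)^d/B^d=1+d/B+O(d^2/B^2)$. Thus $d$ is always available in the offset form $B+d$. The remaining difficulty is to strip the known offset $B$ without using subtraction. My first attempt is the reflection $\lfloor B^2/(B+d)\rfloor=B-d$, valid for $d^2<B$. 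This yields both $B+d$ and $B-d$, so I will look for a single quotient of polynomials in $B+d$, $B-d$ and $B$ in which the $B$-terms cancel inside the floor. A candidate is
\[
\Bigl\lfloor\frac{(B+d)^2\,K}{(B-d)\,K'}\Bigr\rfloor ,
\]
with $K,K'$ chosen as powers of $B$ so that the leading terms divide out exactly and only a multiple of $d$ survives above the error.

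If that attempt fails, I will fall back on the exponent route. The term $\lfloor 2^{x}/2^{y}\rfloor$ (after a correction making it $1$ when $x<y$) gives $2^{d}$. I would then recover $d$ from $2^d$ by a floor of a ratio of the form $(2^d)^{c}$ against $(2^{c}+1)^{(\cdot)}$, with $d$ appearing only in the base. This is exactly where variable-base exponentiation must be used, and it is the step that Theorem~\ref{thm:main} shows is impossible with a fixed base.

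Finally, I will handle the degenerate cases $x<y$, $y=0$ and $d=0$ using the convention $\lfloor a/0\rfloor=0$ and $0^0=1$. Then $x\bmod y=x\dotminus y\lfloor x/y\rfloor$ completes the reduction to \eqref{eq:remainder-basis}.
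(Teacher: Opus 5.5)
\textbf{There is a genuine gap: Step~2, which carries the whole theorem, is never actually constructed.}

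Your reduction is sound as far as it goes. The base-two exponential $2^x$ lies in $\mathcal T$. Multiplication is already supplied by Lemma~\ref{lem:multiplication}, because $S\subseteq\mathcal T$, so your unfinished Step~1 is not the issue. And $x\bmod y\in\mathcal T$ would finish via \eqref{eq:remainder-basis}.

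The offset form $B+d$ needs no variable base at all. Take $B=2^{2(x+y)+4}$, or any larger power of two given by an $S$-term, and put $p=y\lfloor x/y\rfloor$. The reflection identity gives $x+\lfloor B^2/(B+p)\rfloor=B+(x\bmod y)$, and then $\lfloor B^2/(B+d)\rfloor=B-d$. All of this happens inside $S$.

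Consequently, your first attempt cannot succeed. Suppose a term built with $+$, $\times$, $\lfloor\cdot/\cdot\rfloor$ from $B$, $B\pm d$ and powers of $B$ returned $d$ for large powers of two $B$. Then $x\bmod y$ would lie in $S$, contradicting Corollary~\ref{cor:remainder}.

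Your explicit candidate shows the same failure directly. Since $(B+d)^2=(B-d)(B+3d)+4d^2$, taking $K=K'$ gives $B+3d$, and other ratios give $1$ or a value dominated by $B^2$. More generally, for a quotient of polynomials with nonnegative coefficients in $B$, $B+d$, $B-d$, the leading coefficients in $B$ do not depend on $d$. So for fixed $d$ the ratio has a $d$-independent limit as $B\to\infty$, and the offset never cancels.

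Your fallback, recovering $d$ from $2^d$, is precisely a decoder $T(2^n)=n$. Its existence in $\mathcal T$ is a consequence of the theorem you are proving, and you give no formula for it. Closed-form manipulations of $c^d$ at a few bases, such as $\lfloor B(B+1)^d/B^d\rfloor=B+d$, only reproduce $d$ plus an offset.

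\textbf{The missing idea} is to remove the offset multiplicatively, using an auxiliary exponent that is not a closed-form function of the target. This requires encodings of arbitrary elementary functions, uniform in a variable base.

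The paper first proves, in Lemma~\ref{lem:uniform-power-encoding}, that every elementary $f$ has a $\mathcal T$-term $F(c,\mathbf x)=c^{f(\mathbf x)}$ for all $c\geq1$. The proof is by induction over the basis $\{+,\bmod,2^x\}$. The hard case is the quotient. With $B=F(2)+2$, one has $\lfloor(B^f-1)/(B^g-1)\rfloor\equiv\lfloor f/g\rfloor\pmod{B-1}$. This residue is used only through $\mathcal R(c,u,v)=c^{u\bmod v}$, which is a $\mathcal T$-term.

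Next, take $B=2^f+2$, which is available as $F(2,\mathbf x)+2$ without knowing $f$. Let $N$ be the number of $j<B^2$ with $B(B-1)^j\geq(f+1)B^j$. Lemma~\ref{lem:geometric-recovery} then gives $f=\lfloor B\,H(B-1)/H(B)\rfloor$, where $H(c)=c^N$ is the uniform encoding of the elementary count $N$. Evaluating one encoding at the two bases $B-1$ and $B$ is exactly what a fixed base cannot do.

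Even the direct treatment of remainder in Remark~\ref{rem:remainder-recovery} goes through this encoding of a count. So reducing the problem to $x\bmod y$ does not let you avoid it.
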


\begin{theorem}[Predicate completeness]\label{thm:predicates}
Every elementary predicate has its characteristic function in $S$:
\[
  \Pred(S)=\Pred(\E).
\]
The translation from a finite elementary description to an $S$-term is
effective. More generally, every elementary function with a fixed finite
range belongs to $S$.
\end{theorem}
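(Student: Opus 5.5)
The inclusion $\Pred(S)\subseteq\Pred(\E)$ is immediate from $S\subseteq\E$ (Theorem~\ref{thm:main}). For the converse I would prove the stronger statement directly: every elementary $f$ with range contained in $\{0,1,\dots,k\}$ belongs to $S$. The strategy is to push all uses of remainder to a single outermost position, and then to remove that one remainder by equality tests. Such tests need only finitely many candidate values, which is why the finite range matters.

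The first step is to build a small Boolean toolkit inside $S$, using multiplication, which lies in $S$ by Theorem~\ref{thm:main}:
\begin{itemize}
\item $\sg(x)=\lfloor x/x\rfloor$, using the convention $\lfloor 0/0\rfloor=0$.
\item Negation of a truth value $p$ is $\lfloor 1/(p+1)\rfloor$.
\item $[x\le y]=\sg\lfloor (y+1)/(x+1)\rfloor$.
\item $[x=y]=[x\le y]\,[y\le x]$.
\item Conjunction is multiplication, and disjunction is $\sg(p+q)$.
\end{itemize}
The key observation is that a remainder can be \emph{tested} without subtraction. For a fixed constant $j$,
\[
  [\,x\bmod y=j\,]=[\,y\lfloor x/y\rfloor+j=x\,]\cdot[\,j+1\le y\,].
\]
The case $y=0$ is handled by one further clause, since then $x\bmod 0=x$ and the test reduces to $[x=j]$. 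Consequently, if $f(\vec x)=M(\vec x)\bmod C(\vec x)$ with $M,C\in S$ and $f$ takes values in $\{0,\dots,k\}$, then
\[
  f=\sum_{j=0}^{k} j\cdot[\,M\bmod C=j\,]\in S.
\]
This finite sum is a fixed term, because $k$ is a constant.

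The second step, which I expect to be the main obstacle, is a normal form: every elementary $f$ can be written as $f=M\bmod C$ with $M,C\in\langle x+y,xy,\lfloor x/y\rfloor,2^x\rangle=S$. The effectiveness claim in the theorem must also be carried through this step. My plan is to derive the normal form from the arithmetic-term machinery behind \eqref{eq:remainder-basis} in~\cite{PSS2026}, in the style of Jones–Matiyasevich and Mazzanti. Elementary functions are obtained from $+$ and $\dotminus$ by substitution, bounded sums and bounded products. Each such construction can be realized by packing the finitely many relevant values into the digits of a large power $2^{B(\vec x)}$, with $B$ an elementary bound. The packed number is computed by closed-form geometric-series quotients, which use only $+$, $\cdot$, $\lfloor\,/\,\rfloor$ and $2^x$. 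The desired value is then read off as a digit, namely $\lfloor N/2^{a}\rfloor\bmod 2^{b}$.

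I would first try to show by induction on the elementary description that every intermediate digit extraction can be kept \emph{inside} a division, so that only the final extraction is a remainder. The intended tool is $\lfloor\lfloor N/2^a\rfloor/2^b\rfloor=\lfloor N/2^{a+b}\rfloor$ together with carries controlled by guard bits. Truncated subtraction is the delicate case. My plan there is to realize $x\dotminus y$ digitwise as a difference $N_1-N_2$ of packed numbers, and to fold that difference into the final modulus via $(N_1-N_2)\bmod 2^b=(N_1+(2^b-1)N_2)\bmod 2^b$ whenever $N_1\ge N_2$ holds digitwise.

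If a full single-remainder normal form proves hard, I would fall back to an argument tailored to truth values. Here $f$ is obtained from an elementary predicate, and I would use the Jones–Matiyasevich characterization~\cite{JonesMatiyasevich1982} of elementary relations by (bounded) exponential Diophantine conditions. Each such condition is an equality or inequality between expressions in $+$, $\cdot$ and $2^x$, so it is testable with the toolkit above. The bounded quantifier would be absorbed by one digit-packing identity of the same kind as in the second step, so that only a single remainder test on the packed number remains. With either route, a predicate is the special case $k=1$, which gives $\Pred(\E)\subseteq\Pred(S)$. Effectiveness follows because each translation step is explicit.
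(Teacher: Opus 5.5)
Your first step is sound: the Boolean toolkit lies in $S$, the test $[M\bmod C=j]=[C\lfloor M/C\rfloor+j=M]$ is valid (already for $C=0$), and the finite-range reduction mirrors the paper's last step.

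The gap is the second step, which carries the entire content of the theorem and is left as a plan. A normal form $f=M\bmod C$ with $M,C\in S$ is equivalent to every elementary function being a difference $A-B$ of $S$-terms. One direction is $M\bmod C=M-C\lfloor M/C\rfloor$. Conversely, $A-B=(A+\pred(2^b)B)\bmod 2^b$ whenever $0\le A-B<2^b$. In particular the normal form would yield $S$-terms with $A(2^n)-B(2^n)=n$. The paper establishes nothing of this strength, and your induction gives no mechanism for it.

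The induction breaks as soon as a value that exists only as a digit must be used nonlinearly. Realizing substitution, bounded sums or bounded products on packed numbers needs blockwise floor division, blockwise comparison or truncated subtraction, blockwise exponentiation and blockwise remainder. None of these is a single operation of the basis, and guard bits control carries only for blockwise addition and scaling. The identity $\lfloor\lfloor N/2^a\rfloor/2^b\rfloor=\lfloor N/2^{a+b}\rfloor$ composes shifts, not masks, so it does not keep a remainder inside a division. Your fold trick handles a difference only at the final extraction. This is precisely the obstruction the paper stresses: $2^{x\bmod y}\in S$, yet a nested residue is reached only doubly exponentially (Remark~\ref{rem:nesting}).

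The fallback has the same hole in sharper form. The reduction to $\exists\vec a<t\;P_0(\vec n,\vec a)=0$ is fine; it is Lemma~\ref{lem:bounded-equations}. But the packed number then has $t^k$ blocks, with $t$ an input-dependent $S$-term, and you must decide whether \emph{some} block records a zero. A test $[M\bmod C=j]$ applied to a linear packing reads a block, or a linear combination of blocks such as their sum. It never reads the disjunction over all of them, so a genuinely nonlinear aggregation device is missing.

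The paper supplies this device in two parts. The hypercube normal form gives $\HW(V)=u(N+d)$ with $V\in S$, using $\Delta$-blocks and Lemmas~\ref{lem:signed} and~\ref{lem:padding} to avoid subtraction. The double Hamming exponential $2^{2^{\HW(x)}}\in S$, built from the binomial residue, the lift and the exponentiated gcd, then reduces the question to one comparison.

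A parity device would also serve. Let $V\in S$ be the padded packing of the block values $2^u-P_0\in[1,2^u]$, and let $\mu$ be the periodic mask of bit $u$ in every block. Kummer's theorem makes $\binom{V+\mu}{\mu}$ odd exactly when no $P_0$ vanishes. Since the modulus $q$ in Lemma~\ref{lem:binomial} is even, this gives $\chi_P=[\,2\lfloor U/2\rfloor=U\,]$ with $U=U(V+\mu,\mu)$. Some such idea has to be supplied, and ``one digit-packing identity of the same kind'' does not supply it.
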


The first theorem does not arise from an inability to perform ordinary
multiplication: adjoining that operation leaves the class unchanged.
Section~\ref{sec:operations} also gives terms for predecessor and
exponentiated remainder. Thus
\[
  x\dotminus1\in S,\qquad 2^{x\bmod y}\in S,
  \qquad x\bmod y\notin S.
\]
The class can represent an injective exponential encoding of a residue,
but cannot uniformly recover the residue. Theorem~\ref{thm:variable-basis}
shows that variable-base exponentiation supplies enough freedom to recover
all elementary numerical outputs. Theorem~\ref{thm:predicates} describes a
different feature of the weaker class: every elementary yes-or-no
question, including the graph of remainder, has a characteristic term in
$S$ even though the corresponding numerical output need not have such a
term.

The negative proof is based on a sparsity phenomenon for dyadic inputs. Put
\[
  \Pow=\{2^e:e\in\N\},\qquad
  g(M)=1+\log_2(M+1)\quad(M\geq1).
\]
We prove that every fixed term $A$ using only $+$, $\times$, and $\lfloor x/y\rfloor$
satisfies
\begin{equation}\label{eq:intro-sparsity}
  \#\bigl(A(\Pow^k)\cap\{0,\ldots,M\}\bigr)
  \leq C_A g(M)^{K_A},
\end{equation}
with the input exponents independent and unrestricted. The difficulty is
integer division: a small quotient can have arbitrarily large operands, so
a naive induction on the ranges of the operands fails.

The key idea is to separate small and large intermediate quotients. Small
quotients are frozen at their exact values and counted inductively. After
that reduction, every surviving proper quotient is large, so its floor may
be replaced by real division with a uniformly controlled logarithmic error.
For each frozen configuration the resulting expression is a positive
rational function with bounded algebraic complexity, and a normalization of
its dyadic monomials gives a polylogarithmic approximation set. This proves
\eqref{eq:intro-sparsity}. Finally, arbitrary nesting of $2^x$ is removed by
replacing outermost exponential subterms by independent dyadic variables.
The resulting polylogarithmic range cannot contain all integers
$0,1,\ldots,M$, which rules out a decoder satisfying~\eqref{eq:decoder}.

The variable-base proof has two stages. First, an elementary function
$f(\mathbf x)$ can be encoded as $c^{f(\mathbf x)}$ by a $\mathcal T$-term
with an additional base argument $c\geq1$. The main closure step is
integer division: a geometric-series quotient, reduced modulo one less
than its base, records the quotient of two encoded exponents. This residue
is itself used only through its exponential encoding. Second, for a large
enough $B$, the sequence $B(1-1/B)^j$ crosses the threshold $f+1$ in a step
that leaves it strictly above $f$. An encoded count locates that step, and
one final integer division recovers $f$. The proof compiles the count
through the established basis~\eqref{eq:remainder-basis}; it does not add
bounded summation or search to the term language.

Predicate completeness uses a separate construction entirely within $S$.
We first show that the double exponential $2^{2^{\HW(x)}}$, where $\HW$ denotes binary Hamming
weight, belongs to $S$. We then express the hypercube integer associated
with an elementary predicate as a difference $A-B$ with $A,B\in S$.
An explicit quotient construction adds a prescribed block of binary ones
to this difference. This produces an $S$-term whose Hamming weight detects
whether the hypercube contains a witness. No decoding of a general
numerical payload is required.

Section~\ref{sec:operations} establishes the basic constructions in $S$.
Sections~\ref{sec:approximation}--\ref{sec:separation} prove the dyadic
negative result, and Section~\ref{sec:fixed-bases} extends it uniformly to
all fixed integer bases. Section~\ref{sec:variable-base} proves the
variable-base basis theorem. Finally, Section~\ref{sec:hamming} constructs
the double Hamming exponential in $S$, and Section~\ref{sec:predicates}
proves the hypercube normal form and predicate completeness. These last
two sections are independent of the variable-base proof.

\section{Basic constructions in the quotient class}\label{sec:operations}

We use $x\bmod0=x$, consistently with
$x\bmod y=x-y\lfloor x/y\rfloor$. Subtraction in an identity is an ordinary
integer operation used in its proof; it is not an additional generator of
$S$. Every claimed $S$-term below is given using the permitted operations.

\begin{lemma}[Multiplication]\label{lem:multiplication}
For all $x,y\in\N$,
\begin{equation}\label{eq:multiplication}
  xy=\left\lfloor
  \frac{2^{2x+2y+2}}
  {\left\lfloor\left\lfloor2^{2x+2y+2}/x\right\rfloor/y\right\rfloor}
  \right\rfloor.
\end{equation}
Consequently $S=\langle x+y,xy,\lfloor x/y\rfloor,2^x\rangle$.
\end{lemma}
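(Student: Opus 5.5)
We verify the identity \eqref{eq:multiplication} by a direct floor-function estimate, treating the degenerate cases separately. The consequence then follows immediately: the right-hand side of \eqref{eq:multiplication} is built only from $+$, $\lfloor x/y\rfloor$, constants and $2^x$ (the exponent $2x+2y+2$ is $x+x+y+y+2$). Hence $xy\in S$, so $\langle x+y,xy,\lfloor x/y\rfloor,2^x\rangle\subseteq S$, and the reverse inclusion is trivial.

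First we dispose of $x=0$ or $y=0$. If $x=0$ then $\lfloor N/0\rfloor=0$, where $N=2^{2x+2y+2}$. The next quotient is again $0$, so the outer quotient is $\lfloor N/0\rfloor=0=xy$. If $x\ge1$ and $y=0$, the middle quotient is $\lfloor\cdot/0\rfloor=0$, and again the result is $0$. So the totalization convention makes these cases work.

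Now assume $x,y\ge1$. Write $N=2^{2x+2y+2}$, and use the standard fact $\lfloor\lfloor N/x\rfloor/y\rfloor=\lfloor N/(xy)\rfloor$. This holds by the nested-floor identity for positive integer divisors. Put $m=xy$ and $q=\lfloor N/m\rfloor$; we must show $\lfloor N/q\rfloor=m$. We have $N=qm+r$ with $0\le r<m$, so $N/q=m+r/q$. It therefore suffices to show $r<q$, i.e.\ $m\le q$ would suffice since $r<m$. In fact we want $m<q$, or at least $r/q<1$.

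The key size estimate is $m^2<N$, which gives $q\ge\lfloor N/m\rfloor>m-1$, hence $q\ge m>r$. To prove $m^2<N$, use $x<2^x$ and $y<2^y$, so $m^2=x^2y^2<2^{2x+2y}<N$. Then $q\ge m$, since $N/m>m$. Consequently $0\le r/q<1$ and $\lfloor N/q\rfloor=m$, as required.

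The only step needing care is the nested-floor identity together with the zero cases. The main obstacle, mild here, is ensuring the bound $N>m^2$ holds with enough slack; the factor $2^{2}$ in the exponent gives it comfortably.
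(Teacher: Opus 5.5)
Your proof is correct and takes essentially the paper's route: you reduce the nested quotient to $\lfloor N/(xy)\rfloor$, show $\lfloor N/\lfloor N/m\rfloor\rfloor=m$ once $N$ is large compared with $m=xy$, and dispose of $x=0$ or $y=0$ by the totalization convention. The only difference is minor: you argue through $N=qm+r$ with $r<m\le q$, which needs only $m^2\le N$, whereas the paper bounds $N/q<mN/(N-m)<m+1$ under the slightly stronger hypothesis $N>m(m+1)$.
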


\begin{proof}
For positive integers $b,c$,
$\lfloor\lfloor a/b\rfloor/c\rfloor=\lfloor a/(bc)\rfloor$.
Also, if $m>0$ and $N>m(m+1)$, then
\[
  m\leq\frac{N}{\lfloor N/m\rfloor}
  <\frac{mN}{N-m}<m+1.
\]
Thus $\lfloor N/\lfloor N/m\rfloor\rfloor=m$.
For $x,y>0$, take $N=2^{2x+2y+2}$ and $m=xy$. Since
$xy\leq2^{x+y}$, we have $xy(xy+1)<N$, proving
\eqref{eq:multiplication}. If either input is zero, the inner denominator
and the totalized outer quotient are both zero, as required.
\end{proof}

\begin{lemma}[Predecessor, tests, and encoded remainder]\label{lem:basic}
The following functions belong to $S$:
\begin{align}
  \pred(x):=x\dotminus1
    &=\left\lfloor\frac{x^2}{x+1}\right\rfloor,\label{eq:pred}\\
  \delta_0(x):=\boldsymbol1_{x=0}
    &=\left\lfloor\frac1{x+1}\right\rfloor,\qquad
  \sg(x):=\boldsymbol1_{x>0}
    =\left\lfloor\frac{2x}{x+1}\right\rfloor,\label{eq:tests}\\
  F(x,y):=2^{x\bmod y}
    &=\left\lfloor\frac{2^x}{2^{y\lfloor x/y\rfloor}}\right\rfloor.
    \label{eq:encoded-remainder}
\end{align}
\end{lemma}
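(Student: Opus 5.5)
The plan is to verify each of the three displayed identities directly by elementary estimates. Membership in $S$ then follows because each right-hand side is built only from constants, addition, multiplication, integer division and $2^x$. Multiplication is available by Lemma~\ref{lem:multiplication}, so $x^2=x\cdot x$ and $y\lfloor x/y\rfloor$ are $S$-terms.

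For \eqref{eq:pred}, I would handle $x=0$ separately: the quotient is $\lfloor 0/1\rfloor=0=0\dotminus1$. For $x\ge1$, I would write $x^2=(x+1)(x-1)+1$. Since $0\le 1<x+1$, the quotient is exactly $x-1$.

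For \eqref{eq:tests}, when $x=0$ the two quotients are $\lfloor1/1\rfloor=1$ and $\lfloor0/1\rfloor=0$. When $x\ge1$, we have $1<x+1$, which gives $\delta_0(x)=0$. Also $x+1\le 2x<2(x+1)$, which gives $\sg(x)=1$.

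For \eqref{eq:encoded-remainder}, I would use $x=y\lfloor x/y\rfloor+(x\bmod y)$. This holds for $y>0$ by the division algorithm, and for $y=0$ by the convention $\lfloor x/0\rfloor=0$ together with $x\bmod0=x$. Put $q=y\lfloor x/y\rfloor$ and $r=x\bmod y$. Then $2^x=2^q\cdot2^r$ with $2^q\ge1$, so the quotient is exactly $2^r$ and no floor is lost.

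The only point needing care is the $y=0$ convention in the last identity. There the denominator is $2^0=1$ rather than $0$, and the output is $2^x=2^{x\bmod 0}$, as required. None of the steps is a genuine obstacle; the lemma is a routine verification.
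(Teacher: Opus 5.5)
Your proposal is correct and matches the paper's proof: the same identity $x^2=(x-1)(x+1)+1$ for the predecessor, direct evaluation of the two tests, and the observation that $y\lfloor x/y\rfloor\le x$ (including $y=0$) makes the last quotient exactly $2^{x\bmod y}$. Your explicit appeal to Lemma~\ref{lem:multiplication} for the membership of $x\cdot x$ and $y\lfloor x/y\rfloor$ in $S$ supplies a detail the paper leaves implicit.
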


\begin{proof}
For $x>0$, $x^2=(x-1)(x+1)+1$; the case $x=0$ is immediate.
The two tests follow directly from their displayed quotients. Finally,
$y\lfloor x/y\rfloor\leq x$, also when $y=0$, so the last quotient is
exactly $2^{x-y\lfloor x/y\rfloor}$.
\end{proof}

\begin{remark}[Nested remainder encodings]\label{rem:nesting}
Membership of $F$ in $S$ permits substitution of $S$-terms for its
arguments; it does not permit substitution of an unavailable numerical
remainder. A useful identity that respects this distinction is
\[
  \bigl((2^x-1)\bmod(2^a-1)\bigr)+1=2^{x\bmod a}
  \qquad(x,a\in\N).
\]
For $a>0$, write $x=qa+r$, $0\leq r<a$, and use
$2^x-1=(2^a-1)2^r\sum_{j=0}^{q-1}2^{aj}+2^r-1$.
For $a=0$, use the convention for remainder by zero. Hence
\begin{equation}\label{eq:nested-encoding}
  2^{\,2^{x\bmod(y\bmod z)}}
  =2F\bigl(\pred(2^x),\pred(F(y,z))\bigr)\in S.
\end{equation}
This establishes a double exponential encoding, not a single exponential
one. Predicate completeness below does not assume that
$2^{x\bmod(y\bmod z)}$ belongs to $S$.
\end{remark}

\section{Two approximation lemmas}\label{sec:approximation}

The first lemma bounds the number of approximations needed for a positive
rational function on dyadic inputs. The second controls the relative error
introduced by removing floors at large intermediate values. Real division
and logarithms are used only in the analysis.

\paragraph{Positive rational functions on dyadic inputs.}

\begin{lemma}\label{lem:rational}
Let $P,Q$ be nonzero polynomials in $k$ variables with nonnegative integer
coefficients. Suppose each has at most $J$ nonzero monomials and every
coefficient is at most $B$, where $J$ is a positive integer and $B\geq1$.
For every $M\geq1$
there is a finite set $\mathcal F\subset(0,\infty)$ such that every value
\[
  R=\frac{P(\mathbf z)}{Q(\mathbf z)},
  \qquad \mathbf z\in\Pow^k,\qquad \frac12\leq R\leq M+2,
\]
lies within $1/100$ of a member of $\mathcal F$, and
\begin{equation}\label{eq:net-asymptotic}
  \#\mathcal F
  \leq C_J\bigl(1+\log_2((M+2)B)\bigr)^{2J+1},
\end{equation}
where $C_J$ depends only on $J$. In particular, if
$B\leq c(M+1)^d$ for fixed $c\geq1$ and $d\in\N$, then
$\#\mathcal F\leq Cg(M)^{2J+1}$, with $C$ depending only on $J,c,d$.
\end{lemma}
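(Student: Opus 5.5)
The plan is to normalize each polynomial by its dominant dyadic monomial and discard the monomials whose exponents lie far below it. Only a bounded pattern of exponent \emph{gaps} then matters, together with one exponent difference between numerator and denominator, and each of these ranges over $O(\log((M+2)B))$ values. The coefficients of $P$ and $Q$ are fixed, so they contribute no choices.

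Write $\mathbf z=(2^{e_1},\dots,2^{e_k})$. Each monomial of $P$ becomes $a_i2^{u_i}$, where $u_i=u_i(\mathbf e)$ is a linear form in $\mathbf e$ with nonnegative integer coefficients and $1\le a_i\le B$ for $i\le J$. Likewise $Q(\mathbf z)=\sum_j b_j2^{v_j}$. Put $u^*=\max_i u_i$ and $v^*=\max_j v_j$. Fix a cutoff
\[
  G=\bigl\lceil\log_2\bigl(1000\,J B(M+3)\bigr)\bigr\rceil ,
\]
which is $O_J(1+\log_2((M+2)B))$. Then
\[
  P(\mathbf z)=2^{u^*}\bigl(p+\varepsilon_P\bigr),\qquad
  p=\sum_{u^*-u_i\le G}a_i2^{-(u^*-u_i)} .
\]
Here $p\ge1$, since some $a_i\ge1$ sits at the maximal exponent, and $0\le\varepsilon_P\le JB2^{-G}$. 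The same holds for $Q$ with $q$ and $\varepsilon_Q$. Hence $R=2^{d}(p+\varepsilon_P)/(q+\varepsilon_Q)$ with $d=u^*-v^*\in\mathbb Z$. The quantity $p$ is determined by the \emph{gap vector} $(\min(u^*-u_i,G+1))_i\in\{0,\dots,G+1\}^J$, and similarly for $q$. The coefficients $a_i,b_j$ are fixed data of $P,Q$, so there are at most $(G+2)^J$ possible values of $p$ and at most $(G+2)^J$ possible values of $q$.

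Next I bound $d$. We have $1\le p,q\le JB$ and the error terms are tiny, so the ratio $(p+\varepsilon_P)/(q+\varepsilon_Q)$ lies in $[1/(2JB),2JB]$. Combined with $1/2\le R\le M+2$, this forces
\[
  |d|\le\log_2\bigl(4JB(M+2)\bigr)+1=O(G).
\]
Define $\mathcal F$ as the set of all numbers $2^dp/q$ over all admissible gap vectors and all such $d$. Then $\#\mathcal F\le O(G)\,(G+2)^{2J}$, which gives \eqref{eq:net-asymptotic} with $C_J$ depending only on $J$.

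The main point to check carefully is the error estimate. I would write
\[
  \Bigl|R-2^d\frac pq\Bigr|\le R\,\Bigl|1-\frac{p(q+\varepsilon_Q)}{q(p+\varepsilon_P)}\Bigr|
  \le R\bigl(\varepsilon_P+\varepsilon_Q\bigr)\le (M+2)\cdot 2JB2^{-G}\le \frac{1}{100},
\]
using $p,q\ge1$ and the choice of $G$. Dominance must be measured relative to the maximal exponent and not term by term, because several monomials can share that maximum; the normalization $p\ge1$ handles this. The error must also be relative, which is why the factor $M+2$ is built into $G$. The final clause follows by substituting $B\le c(M+1)^d$, so that $\log_2((M+2)B)=O_{c,d}(g(M))$ and the bound becomes $Cg(M)^{2J+1}$.
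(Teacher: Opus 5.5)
Your proposal is correct and is essentially the paper's proof: you normalize each sum by its dominant dyadic monomial, discard monomials whose gap exceeds a cutoff of order $\log_2((M+2)B)$, and count the resulting gap records together with the integer exponent difference $d$ (the paper's $\Delta$). The only differences are cosmetic. You bound the error relatively, $|R-2^dp/q|\le R(\varepsilon_P+\varepsilon_Q)$, which allows a cutoff built from $JB(M+3)$ rather than the paper's $(JB)^3(M+2)$; and you should say explicitly, as the paper does, that admissible gap vectors contain a zero gap, so that $q\geq1$ and $\mathcal F\subset(0,\infty)$.
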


\begin{proof}
Write $\mathbf z=(2^{e_1},\ldots,2^{e_k})$. Then
\[
  P(\mathbf z)=\sum_{i=1}^{p}c_i2^{\alpha_i\cdot\mathbf e},
  \qquad
  Q(\mathbf z)=\sum_{j=1}^{q}d_j2^{\beta_j\cdot\mathbf e},
\]
where $p,q\leq J$, the coefficients belong to
$\{1,\ldots,\lfloor B\rfloor\}$, and the multiindices have nonnegative
integer entries. Set
\[
  E_P=\max_i\alpha_i\cdot\mathbf e,\qquad
  E_Q=\max_j\beta_j\cdot\mathbf e,\qquad
  \Delta=E_P-E_Q\in\mathbb Z,
\]
and normalize by
\[
  \overline P=2^{-E_P}P(\mathbf z),\qquad
  \overline Q=2^{-E_Q}Q(\mathbf z).
\]
Each normalized sum contains an exponent-zero monomial with coefficient at
least one. Hence
\begin{equation}\label{eq:normal-bounds}
  1\leq\overline P,\overline Q\leq JB,
  \qquad R=2^\Delta\frac{\overline P}{\overline Q}.
\end{equation}
The assumed range of $R$ gives
\begin{equation}\label{eq:delta-bounds}
  -\log_2(2JB)\leq\Delta\leq\log_2((M+2)JB).
\end{equation}

Choose
\[
  L=\left\lceil\log_2\bigl(200(M+2)(JB)^3\bigr)\right\rceil.
\]
In each normalized sum, discard monomials whose exponents are less than
$-L$, and denote the retained sums by $P_*,Q_*$. The exponent-zero monomials
remain, so $P_*,Q_*\geq1$. With $\epsilon=JB2^{-L}$, we have
\[
  0\leq\overline P-P_*\leq\epsilon,
  \qquad 0\leq\overline Q-Q_*\leq\epsilon.
\]
Since $\overline Q,Q_*\geq1$ and $P_*\leq JB$,
\[
  \left|\frac{\overline P}{\overline Q}-\frac{P_*}{Q_*}\right|
  \leq(1+JB)\epsilon.
\]
Indeed, the numerator of the difference is
$(\overline P-P_*)Q_*-P_*(\overline Q-Q_*)$. Thus the approximation
$R_*=2^\Delta P_*/Q_*$ satisfies
\begin{align*}
  |R-R_*|
  &\leq2^\Delta(1+JB)JB2^{-L}\\
  &\leq2(M+2)(JB)^3\,2^{-L}
  \leq\frac1{100},
\end{align*}
by~\eqref{eq:delta-bounds} and the choice of $L$.

It remains to count the possible approximations. For each of the at most
$2J$ monomials, record either its normalized exponent in
$\{-L,\ldots,0\}$ or the symbol ``discarded.'' There are $L+2$ possible
records per monomial. Together with $\Delta$, these records determine
$R_*$, since the coefficient lists are fixed. Take all records with a
retained exponent-zero monomial in each sum and all integer values of
$\Delta$ allowed by~\eqref{eq:delta-bounds}. They form a finite set
$\mathcal F$ with
\begin{equation}\label{eq:net-count}
  \#\mathcal F
  \leq\bigl(3+\log_2(2(M+2)(JB)^2)\bigr)(L+2)^{2J}.
\end{equation}
Records need not be realized by an input tuple; including them only
enlarges the set. Both $L+2$ and the first factor in~\eqref{eq:net-count}
are bounded by a constant depending on $J$ times
$1+\log_2((M+2)B)$. This proves~\eqref{eq:net-asymptotic} and its
polynomial-coefficient specialization.
\end{proof}

\paragraph{Stability under removal of large floors.}

\begin{lemma}\label{lem:rounding}
Consider a finite term tree built from $+$, $\times$, and $\lfloor x/y\rfloor$, evaluated
at nonnegative integer inputs. Some whole subtrees may already have been
replaced by their exact values. Suppose every remaining division node has
integer output greater than $H>0$. Replace these divisions by ordinary real
division. The resulting evaluation is well-defined and preserves which
node values are zero. At every node with positive integer value $v$, its
new value $\widetilde v$ satisfies
\begin{equation}\label{eq:log-error}
  \left|\ln\frac{\widetilde v}{v}\right|\leq\frac rH,
\end{equation}
where $r$ is the number of remaining division occurrences in its subtree.
\end{lemma}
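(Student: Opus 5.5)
We argue by structural induction on the term tree, proving simultaneously three facts at every node: the modified evaluation is well defined (no division by zero occurs in real arithmetic), $\widetilde v=0$ exactly when $v=0$, and for $v>0$ the bound \eqref{eq:log-error} holds with $r$ the number of remaining division occurrences in that subtree. The base cases are leaves and frozen subtrees. Both keep their exact values, so $\widetilde v=v$, the error is $0$, and $r=0$.

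Consider first a node $v=a+b$ whose children satisfy the hypotheses with counts $r_a,r_b$. Zero-preservation is immediate, since both values are nonnegative and each child is zero in the new evaluation exactly when it is zero originally. If, say, $b=0$, then $\widetilde v=\widetilde a$ and the bound is inherited. Otherwise $\widetilde a/a$ and $\widetilde b/b$ both lie in $[e^{-\rho},e^{\rho}]$, where $\rho=\max(r_a,r_b)/H\le (r_a+r_b)/H$. A sum of two positive numbers, each scaled by a factor in this interval, is scaled by a factor in the same interval (mediant inequality). So \eqref{eq:log-error} holds with $r=r_a+r_b$. For a product $v=ab$ the logarithms add, giving the error $(r_a+r_b)/H$ directly, and zero-preservation is clear.

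The division nodes are the main step. Take a remaining division node $v=\lfloor a/b\rfloor$ with $v>H>0$. Since $v>0$, the totalization convention forces $b>0$; hence $\widetilde b>0$ by induction, and the real quotient $\widetilde a/\widetilde b$ is well defined. Moreover $a\ge b>0$, so $\widetilde a>0$. The logarithmic errors of numerator and denominator add to give
\[
\left|\ln\frac{\widetilde a/\widetilde b}{a/b}\right|\le\frac{r_a+r_b}{H}.
\]
It remains to compare $a/b$ with $v=\lfloor a/b\rfloor$. We have $v\le a/b<v+1$, so
\[
0\le\ln\frac{a/b}{v}<\ln\Bigl(1+\frac1v\Bigr)\le\frac1v<\frac1H.
\]
Adding this to the previous estimate gives the error $(r_a+r_b+1)/H$, which is the required bound since $r=r_a+r_b+1$ for this node. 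Zero-preservation is automatic here, because both $v$ and $\widetilde v$ are positive.

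The only delicate point is the bookkeeping: we must check that every subtree still satisfies the hypotheses of the lemma, so that the induction applies to it. This is where the hypothesis that every \emph{remaining} division has output greater than $H$ is used. Division nodes inside frozen subtrees are never revisited, and a remaining division with small output cannot occur. Under these conditions the three cases above cover every node, and the induction closes.
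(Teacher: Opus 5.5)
Your proof is correct and follows the paper's argument essentially step for step: structural induction with zero error at leaves and frozen subtrees, the weighted-average (mediant) bound at sums, additive logarithmic errors at products and quotients, and the extra local error $\ln(1+1/v)\le 1/v<1/H$ from the floor at each remaining division node. Your observation that $v>0$ forces $b>0$ and $a\ge b>0$ under the totalized convention is the same positivity fact the paper uses to ensure that the real division is well defined.
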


\begin{proof}
We induct on the tree. Leaves and exactly replaced subtrees have zero
error. Addition and multiplication of nonnegative quantities preserve zero
patterns. At a division node with output $d>H$, both operands are positive;
their approximate values are therefore positive by induction, so the new
division is defined.

For positive products and quotients, absolute logarithmic errors are
bounded by the sum of the operand errors. For a positive sum, the ratio of
its approximate value to its exact value is a weighted average of the
corresponding ratios for its positive summands. Its logarithmic error is
therefore at most the larger summand error, and hence at most their sum.
Zero summands remain exactly zero.

At a division node, the additional local error comes from the floor.
Writing $d=\lfloor a/b\rfloor>H$, we have
\[
  d\leq a/b<d+1,
  \qquad
  0\leq\ln\frac{a/b}{d}
    <\ln(1+1/d)\leq\frac1d<\frac1H.
\]
Consequently,
\[
  \left|\ln\frac{\widetilde a/\widetilde b}{d}\right|
  \leq
  \left|\ln\frac{\widetilde a}{a}\right|
  +\left|\ln\frac{\widetilde b}{b}\right|+\frac1H.
\]
Counting the division occurrences in the two child subtrees and the
current node gives~\eqref{eq:log-error} and completes the induction.
\end{proof}

\section{Dyadic sparsity}\label{sec:sparsity}

For a term $A(z_1,\ldots,z_k)$ using $+$, $\times$, and $\lfloor x/y\rfloor$, write
\[
  N_A(M)=\#\bigl(A(\Pow^k)\cap\{0,\ldots,M\}\bigr).
\]
The next lemma controls this range count even though the input exponents
may be arbitrarily large.

\begin{lemma}[Dyadic sparsity]\label{lem:sparsity}
For every such term $A$, with arbitrary natural-number constants allowed,
there are constants $C_A\geq1$ and $K_A\in\N$ such that
\begin{equation}\label{eq:sparsity}
  N_A(M)\leq C_Ag(M)^{K_A}
  \qquad(M\in\N,\ M\geq1).
\end{equation}
\end{lemma}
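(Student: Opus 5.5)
The plan is to follow the strategy sketched in the introduction: freeze small intermediate quotients, and replace the remaining floors by real division. Fix the term $A$ and let $D$ be its number of division occurrences. Set $H=2D+1$, or any constant with $D/H$ small enough that $e^{D/H}$ is close to $1$; say $D/H\leq 1/200$, so $H=200D+1$. For an input $\mathbf z\in\Pow^k$, call a division node \emph{small} if its integer output is at most $H$, and \emph{large} otherwise. The induction is on $D$, with the statement strengthened to allow arbitrary constants. This matters because freezing a subtree produces a new constant.

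For each input, consider a topmost small division node, that is, a small node with no small ancestor, and suppose one exists. Its value lies in $\{0,\ldots,H\}$. Replacing that whole subtree by its value $v$ gives a term $A_v$ with fewer division occurrences and with the constant $v\leq H$. So the inputs at which some division node is small are covered by finitely many pairs (node, value). There are at most $D(H+1)$ such pairs, and each gives a term $A_v$ agreeing with $A$ on those inputs. Hence those inputs contribute at most $\sum N_{A_v}(M)$ values in $[0,M]$, which induction bounds polylogarithmically. The constants $C_{A_v}$ and $K_{A_v}$ range over finitely many terms, so their maximum still depends only on $A$. I have to take care that induction applies to the whole family uniformly. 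This works because $H$ is fixed by $A$, so the family is finite.

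The remaining case is inputs at which every division node is large. There, Lemma~\ref{lem:rounding} with $r\leq D$ shows that the real-division evaluation $\widetilde A$ satisfies $|\ln(\widetilde A/A)|\leq 1/200$ whenever $A>0$. The value $A=0$ contributes at most one element. Formally, $\widetilde A$ is a fixed rational function of $\mathbf z$. I will clear denominators bottom-up, writing each node as $P/Q$ with $P,Q$ nonnegative-coefficient polynomials. Sums become $(P_1Q_2+P_2Q_1)/(Q_1Q_2)$, products multiply, and quotients cross-multiply. The number of monomials $J$ and the coefficient bound $B$ then depend only on $A$, including its constants. Lemma~\ref{lem:rational} then applies once I check its hypotheses. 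If $A\in[1,M]$, then $\widetilde A\in[\tfrac12,M+2]$, and $\widetilde A$ lies within $1/100$ of some point of $\mathcal F$. Since $\#\mathcal F\leq Cg(M)^{2J+1}$, it remains to show that each point $f\in\mathcal F$ accounts for only $O(1)$ integers $A$. From $|\widetilde A-f|\leq 1/100$ and $A\in[e^{-1/200}\widetilde A,\,e^{1/200}\widetilde A]$, we get $A\in[e^{-1/200}(f-0.01),\,e^{1/200}(f+0.01)]$. This interval has length about $0.01f+0.02$, which is not $O(1)$ when $f$ is large.

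This is the main obstacle: a relative error of $1/H$ only localizes $A$ within a window of length about $M/H$, not $O(1)$. I will address it by letting the net resolution be relative instead. I apply Lemma~\ref{lem:rational} not to one scale but dyadically, covering $A\in[2^t,2^{t+1}]$ for each $t\leq\log_2 M$. Within that range, relative precision $1/H$ leaves about $2^t/H$ candidate integers. That is still too many, so freezing at a constant $H$ is insufficient. The remedy I will try first is to freeze at a polylogarithmic threshold instead. Take $H=H(M)=\lceil 200D(M+2)\rceil$, so that the log error $D/H$ times $M+2$ is at most $1/200$ in absolute terms, and the window around each net point contains $O(1)$ integers. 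The cost is that the number of frozen pairs becomes $D(H+1)=O(M)$, which is too large. The resolution is to notice that a frozen value $v$ affects the final output only through the constant $v\leq H$. The induction hypothesis therefore has to hold uniformly in constants of size polynomial in $M$. This is exactly why Lemma~\ref{lem:rational} tracks $B\leq c(M+1)^d$. So I will strengthen the induction hypothesis to a different statement. The union over all $v\leq H$ of the ranges of $A_v$ intersected with $[0,M]$, where $A_v$ is $A$ with the frozen subtree replaced by the symbol $v$ treated as an extra variable, is polylogarithmic. To get this, I treat frozen values as additional inputs that are bounded integers rather than powers of two. Such an input can be written in binary as a sum of at most $\log_2 H+1$ powers of two; I expect the monomial bookkeeping that this expansion forces to be the delicate part. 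Alternatively, frozen values can simply contribute coefficients bounded by $H^{O(1)}$. Lemma~\ref{lem:rational} is designed for exactly this: its bound depends on $B$ only through $\log B$. The count then becomes polylogarithmic in $M$, as required.
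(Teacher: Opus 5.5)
There is a genuine gap, and it is in your last paragraph. Your threshold $H\approx200D(M+2)$ is accurate enough. But you then let the frozen value $v$ range over all of $\{0,\ldots,H\}$ and try to absorb it into the coefficients, and the strengthened claim you formulate for this is false. Take $A=\lfloor z_1/z_2\rfloor+z_3$. Freezing the division gives $A_v=v+z_3$, and with $z_3=1$ the union over $v\leq H$ of these ranges already contains every integer in $\{1,\ldots,M\}$.

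Lemma~\ref{lem:rational} cannot rescue this, because its net is built for one \emph{fixed} coefficient list. The exponent records together with $\Delta$ determine $R_*$ only because the coefficients are fixed. The bound $B$ enters only through $\log B$, which controls accuracy (the truncation depth and the range of $\Delta$), not the number of admissible coefficient lists. Letting a coefficient run over $\sim H$ values multiplies the net size by $\sim M$. This holds even if you grant the uniformity of $C_{A_v},K_{A_v}$ in the constant $v$, which your one-node-at-a-time induction on $A_v$ would additionally need and does not supply. The binary-expansion variant fails as well: it makes the number of monomials $J$ grow like $\log M$, so $g(M)^{2J+1}$ is no longer polylogarithmic. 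By the example above, no bookkeeping can work for unrestricted $v$.

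The missing idea is that a frozen value is not arbitrary. It is the value, at dyadic inputs, of a proper subterm, and it lies in $\{0,\ldots,H\}$. Apply the induction hypothesis to that \emph{subterm} at bound $H$, not to $A_v$ at bound $M$. Then at most $C_ig(H)^{K_i}$ values occur, and $g(H)=O(g(M))$ because $H$ is polynomial in $M$. The paper freezes all topmost small proper divisions simultaneously, which gives at most $2^s$ cut patterns and only polylogarithmically many realized configurations. Within a configuration the coefficients are fixed polynomials in the cut values, bounded by $c(H+1)^d$, with $J$ independent of the configuration. Lemma~\ref{lem:rational} then gives a net of size $C_1g(M)^{2J+1}$ uniformly, and no uniformity of the induction hypothesis in constants is ever needed.

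You must also never cut the root. Once $H\geq M$, a division root with output in $\{1,\ldots,M\}$ is itself small, and your scheme would collapse the whole term to $A_v=v$. Instead, apply Lemma~\ref{lem:rounding} only to the two children, and compare $\widetilde u/\widetilde v$ with the real quotient $u/v$ rather than with its floor. Each net point then accounts for at most three values of $q=\lfloor u/v\rfloor$.
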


\begin{proof}
We use structural induction on the term tree. A constant has at most one
value, and a variable has at most $1+\lfloor\log_2M\rfloor$ values below
$M$. If $A=U+V$ has output at most $M$, then both child outputs are at most
$M$, so
\[
  N_{U+V}(M)\leq N_U(M)N_V(M).
\]
Likewise, a positive product at most $M$ has both factors in
$\{1,\ldots,M\}$; accounting separately for zero gives
\[
  N_{UV}(M)\leq1+N_U(M)N_V(M).
\]
The induction hypothesis therefore settles addition and multiplication.

Now let $A=\lfloor U/V\rfloor$, and let $s\geq1$ be the number of operation
occurrences in its full term tree. Fix $M\geq1$ and set
\begin{equation}\label{eq:H}
  H=100(s+1)(M+1)^2.
\end{equation}
Zero contributes at most one output, so consider an input tuple for which
$q=A(\mathbf z)\in\{1,\ldots,M\}$. If $u,v$ are the exact values of the
root's children, then $u,v>0$ and
\begin{equation}\label{eq:x}
  q\leq x:=\frac uv<q+1\leq M+1.
\end{equation}

\smallskip
\noindent\emph{Freezing the small quotients.}
Among the proper division nodes with output at most $H$, select those with
no such ancestor below the root. Replace each selected subtree by its
exact value; we call these replacements \emph{cuts}. Every remaining
proper division node has output greater than $H$, since any unselected
small node lies below a selected one. The root itself is never cut.

There are at most $2^s$ cut patterns. Each cut subtree is a strictly smaller
term than $A$, so the induction hypothesis bounds its possible values in
$\{0,\ldots,H\}$ by $C_i g(H)^{K_i}$. For a fixed pattern, multiplying
these bounds controls the number of possible tuples of cut values,
regardless of dependencies among them. Since $H$ is polynomial in $M$,
$g(H)=O_A(g(M))$. Thus the number of cut patterns with assigned values is
at most
\begin{equation}\label{eq:configs}
  C_0g(M)^{K_0}
\end{equation}
for constants depending only on $A$. We retain only configurations realized
by an input tuple under consideration. All proper divisions with zero
output, including those with zero denominator, have been removed by the
cuts.

\smallskip
\noindent\emph{Removing the remaining floors.}
Fix a realized configuration. Replace all remaining divisions, including
the root, by real division, and write the resulting root quotient as
$R=\widetilde u/\widetilde v$. Lemma~\ref{lem:rounding}, applied to the two
child trees, preserves their positivity and gives
\[
  \left|\ln\frac Rx\right|
  \leq\frac sH=:\eta
  \leq\frac1{100(M+1)^2}.
\]
The root floor is not part of this error: $R$ is compared with the real
quotient $x=u/v$, not with its floor $q$. Using
$e^\eta-1\leq2\eta$ for $0\leq\eta\leq1$ and~\eqref{eq:x}, we obtain
\begin{equation}\label{eq:abs-error}
  |R-x|
  \leq x(e^\eta-1)
  \leq2(M+1)\frac sH
  <\frac1{50(M+1)}\leq\frac1{100}.
\end{equation}
In particular, $1/2\leq R\leq M+2$.

\smallskip
\noindent\emph{Uniform control of the rational functions.}
For each fixed configuration, the resulting expression has the form
\begin{equation}\label{eq:pq}
  R(\mathbf z)=\frac{P(\mathbf z)}{Q(\mathbf z)},
\end{equation}
where $P,Q$ have nonnegative integer coefficients. Indeed, start with
variables and constants over denominator one and repeatedly use
\begin{align*}
  \frac{P_1}{Q_1}+\frac{P_2}{Q_2}
    &=\frac{P_1Q_2+P_2Q_1}{Q_1Q_2},\\
  \frac{P_1}{Q_1}\frac{P_2}{Q_2}
    &=\frac{P_1P_2}{Q_1Q_2},\\
  \frac{P_1/Q_1}{P_2/Q_2}
    &=\frac{P_1Q_2}{Q_1P_2}.
\end{align*}
No cancellation is needed. An intermediate numerator may be the zero
polynomial, but a denominator introduced by division is nonzero: its
corresponding value is positive at a realizing tuple. A nonzero polynomial
with nonnegative coefficients is positive at every strictly positive tuple,
so this rational representation is legitimate. The root numerator is
nonzero as well, since $R>0$.

The complexity of $P,Q$ is uniform over configurations. To see this, first
treat the cut values as additional indeterminates. For each of the finitely
many cut patterns, the displayed rules produce a fixed pair of polynomials.
Their coefficients, when viewed as polynomials in the original variables,
are fixed polynomials with nonnegative integer coefficients in the cut
values. Consequently there are constants $J\geq1$, $c\geq1$, and $d\in\N$,
depending only on $A$, such that $P,Q$ each have at most $J$ monomials and
all coefficients are bounded by
\begin{equation}\label{eq:B}
  B(M)=c(H+1)^d.
\end{equation}
One may take $d$ to be the largest degree of a coefficient polynomial and
$c$ to bound its coefficient sum, over all cut patterns. Vanishing
coefficients are simply omitted.

\smallskip
\noindent\emph{Counting the root values.}
Since $B(M)$ is polynomially bounded in $M$, Lemma~\ref{lem:rational}
provides at most $C_1g(M)^{2J+1}$ approximation values for each fixed
configuration, with $C_1$ uniform over all configurations. The coefficient
lists are fixed within a configuration; their possible choices have already
been counted in~\eqref{eq:configs}. The bound~\eqref{eq:B} controls
approximation accuracy, not the enumeration of coefficient lists.

For every relevant input tuple, an approximation $R_*$ satisfies
\[
  |R-R_*|\leq\frac1{100},\qquad
  |x-R_*|<\frac1{50},\qquad q\leq x<q+1.
\]
Therefore
\[
  q\in\{\lfloor R_*\rfloor-1,\ \lfloor R_*\rfloor,
          \ \lfloor R_*\rfloor+1\}.
\]
Each approximation accounts for at most three integer outputs. Combining
this with~\eqref{eq:configs} and restoring the possible zero output gives
\[
  N_A(M)\leq1+3C_0C_1g(M)^{K_0+2J+1}.
\]
This has the required form and completes the structural induction.
\end{proof}

\section{Arbitrary exponential nesting and the negative answer}\label{sec:separation}

We now return to the basis problem posed in~\cite[Problem~15]{PSS2026}.
Dyadic sparsity survives the addition of base-two exponentiation, however
deeply nested, because every exponential subterm takes a dyadic value.

\begin{corollary}\label{cor:exp-sparsity}
For every unary term $T$ built from natural-number constants, $+$,
$\times$, $\lfloor x/y\rfloor$, and $2^x$, there are $C\geq1$ and $K\in\N$ such that
\begin{equation}\label{eq:exp-sparsity}
  \#\bigl(\{T(2^n):n\in\N\}\cap\{0,\ldots,M\}\bigr)
  \leq Cg(M)^K
  \qquad(M\in\N,\ M\geq1).
\end{equation}
\end{corollary}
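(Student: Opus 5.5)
The plan is to reduce Corollary~\ref{cor:exp-sparsity} to Lemma~\ref{lem:sparsity} by replacing the exponential subterms with fresh dyadic variables. Consider the term tree of $T(z)$ and select the \emph{outermost} occurrences of $2^{(\cdot)}$, that is, those exponential nodes with no exponential ancestor. Say there are $m$ of them, $2^{S_1},\ldots,2^{S_m}$, where each $S_i$ is an arbitrary subterm that may itself contain further nested exponentials. Replace the $i$-th occurrence by a new variable $w_i$. The result is a term $A(z,w_1,\ldots,w_m)$ built only from constants, $+$, $\times$, and $\lfloor x/y\rfloor$, and by construction
\[
  T(2^n)=A\bigl(2^n,2^{S_1(2^n)},\ldots,2^{S_m(2^n)}\bigr).
\]
Every exponential value is a power of two, since $S_i(2^n)\in\N$. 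Hence the tuple $(2^n,2^{S_1(2^n)},\ldots,2^{S_m(2^n)})$ lies in $\Pow^{m+1}$ for every $n$.

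It follows that $\{T(2^n):n\in\N\}\subseteq A(\Pow^{m+1})$. Intersecting with $\{0,\ldots,M\}$ and applying Lemma~\ref{lem:sparsity} to $A$ gives the bound~\eqref{eq:exp-sparsity} with $C=C_A$ and $K=K_A$. If $T$ contains no exponential, then $m=0$ and $A=T$, and the lemma applies directly. The constants depend only on $A$, and therefore only on $T$.

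The only point that needs care is that the exponents $S_i(2^n)$ are correlated with $n$ and with one another. This is exactly why Lemma~\ref{lem:sparsity} was proved for \emph{independent, unrestricted} exponents. The substitution passes to a superset of input tuples, which can only enlarge the range, so no information about the $S_i$, such as their size or nesting depth, is needed. I expect no genuine obstacle here; the main thing to verify is that every outermost exponential node is replaced, so that $A$ really is free of $2^x$. This holds because any exponential in $T$ either is outermost or lies inside some $S_i$, and each $S_i$ is discarded together with its node.
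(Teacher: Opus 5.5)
Your proposal is correct and matches the paper's proof: abstract the outermost $2^x$ nodes as fresh variables, note that on input $2^n$ every variable of the resulting $\{+,\times,\lfloor x/y\rfloor\}$-term $A$ takes a value in $\Pow$, and apply Lemma~\ref{lem:sparsity} to the superset $A(\Pow^{m+1})$. Your remark that independent exponents absorb the correlations among the $S_i(2^n)$ is the same observation the paper makes.
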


\begin{proof}
In the term tree of $T$, replace every outermost exponential occurrence by
a fresh variable: these are precisely the exponential nodes with no
exponential ancestor. Each replacement removes its whole subtree,
including any nested exponentials. Replace all remaining occurrences of the
original input by a variable $z_0$. The resulting term
$A(z_0,z_1,\ldots,z_k)$ uses only $+$, $\times$, and $\lfloor x/y\rfloor$.

On input $2^n$, both the original input and all replaced exponential values
belong to $\Pow$. Hence
\[
  \{T(2^n):n\in\N\}\subseteq A(\Pow^{k+1}).
\]
Allowing the new variables to range independently can only enlarge the
range. Lemma~\ref{lem:sparsity} proves the claim.
\end{proof}

\begin{proof}[Proof of Theorem~\ref{thm:main}]
If $T(2^n)=n$ for every $n\in\N$, its range on dyadic inputs contains every
natural number. Corollary~\ref{cor:exp-sparsity} would give
\[
  M+1\leq C\bigl(1+\log_2(M+1)\bigr)^K
  \qquad(M\geq1).
\]
Taking $M=2^t$ yields $2^t+1\leq C(t+2)^K$, a contradiction for large $t$.
For completeness, if $m>K$ and $t\geq2m$, then
\[
  2^t\geq\binom tm\geq\frac{(t/2)^m}{m!},
\]
and the last expression divided by $(t+2)^K$ tends to infinity.

To establish the strict inclusion in $\E$, we verify elementary
membership directly. Define
\[
  \delta_0(a)=1\dotminus a,\qquad
  \sg(a)=1\dotminus\delta_0(a),\qquad
  \chi(a\leq b)=\delta_0(a\dotminus b).
\]
These elementary functions are the zero test, the positivity test, and the
comparison indicator. Bounded sums and products give
\[
  ab=\sum_{j=1}^{b}a,\qquad
  2^a=\prod_{j=1}^{a}2,\qquad
  \lfloor a/b\rfloor=\sg(b)\sum_{j=1}^{a}\chi(jb\leq a).
\]
For $b>0$, the last sum counts the positive multiples of $b$ not exceeding
$a$; for $b=0$, the prefactor makes the value zero. All generators, and
therefore all their finite compositions, belong to $\E$.

Finally, the elementary function
\begin{equation}\label{eq:ell}
  \ell(x)=\sum_{j=1}^{x}\chi(2^j\leq x)
\end{equation}
satisfies $\ell(0)=0$ and $\ell(x)=\lfloor\log_2x\rfloor$ for $x\geq1$.
In particular, $\ell(2^n)=n$: precisely the indices $1,\ldots,n$ contribute,
and $n\leq2^n$. The first part of the theorem excludes a term for $\ell$,
and Lemma~\ref{lem:multiplication} gives the asserted equality of the two
term classes. This proves the strict inclusion. The remainder assertion
follows from Corollary~\ref{cor:remainder} below.
\end{proof}

The negative answer has a further consequence. Prunescu,
Sauras-Altuzarra, and Shunia proved that integer division is representable
from addition, remainder, and base-two exponentiation~\cite[Theorem~4]{PSS2026}.
The converse fails in $S$, which already contains multiplication.

\begin{corollary}\label{cor:remainder}
Integer remainder is not representable by a term built from natural-number
constants, addition, multiplication, integer division, and base-two
exponentiation. Equivalently,
\[
  \bigl((x,y)\longmapsto x\bmod y\bigr)
  \ \notin\ \langle x+y,\ xy,\ \lfloor x/y\rfloor,\ 2^x\rangle.
\]
\end{corollary}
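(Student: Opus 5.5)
We argue by contradiction and reduce to the decoder obstruction of Theorem~\ref{thm:main}. Suppose some term $R(x,y)$, built from natural-number constants, $+$, $\times$, $\lfloor x/y\rfloor$ and $2^x$, satisfies $R(x,y)=x\bmod y$ for all $x,y\in\N$. By Lemma~\ref{lem:multiplication} such a term may equally be regarded as an element of $S$, although the contradiction below does not even need that reduction. From $R$ we aim to build a unary term $T$ with $T(2^n)=n$ for every $n\in\N$. This contradicts Theorem~\ref{thm:main}, or directly Corollary~\ref{cor:exp-sparsity}, since the range of $T$ on $\Pow$ would contain every natural number.

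For the decoder we use the identity from Remark~\ref{rem:nesting} in reverse. Take $x=2^n$ as input; we want to extract $n$. Choose
\[
  T(x)=R\bigl(x,\,2^{x}\bigr).
\]
At $x=2^n$ this gives $2^n\bmod 2^{2^n}=2^n$, which is the wrong quantity. A better choice uses the observation that the exponent $n$ is recoverable by remainder once a number congruent to $n$ is available. Consider
\[
  T(x)=R\bigl(\pred(x),\,\ldots\bigr),
\]
which also does not isolate $n$. The cleanest route is probably to realize $\lfloor\log_2 x\rfloor$ on powers of two through a remainder of the shape $m\bmod(m-n)$, but producing $n$ as a summand is exactly the difficulty.

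We therefore switch to a direct sparsity argument, which avoids needing an explicit decoder. Substitute $x=2^n$ and $y=2^n-1=\pred(2^n)$, or use the two-variable form. Specialize to $T(x)=R\bigl(x^{2}\cdot 2^{x}\ldots\bigr)$. The simplest choice is
\[
  T(z)=R\bigl(z,\;z\bigr)
\]
composed so that its values are dense. Concretely, take $T(z)=R\bigl(2^{z},\,z+1\bigr)$. Its values on $z=2^n$ are $2^{2^n}\bmod(2^n+1)$. Since $2^{2n}\equiv1 \pmod{2^n+1}$ and $2^n\equiv-1$, we have $2^{2^n}\equiv(2^{2n})^{2^{n-1}/n}$, and that exponent is not integral in general, so this example fails too.

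Instead we use the fully general two-variable sparsity. Replace each outermost exponential in $R(2^{a},2^{b}+1)$ by a fresh dyadic variable, exactly as in the proof of Corollary~\ref{cor:exp-sparsity}; the extra $+1$ is just a term. This gives a term $A$ in $+,\times,\lfloor x/y\rfloor$ whose range on $\Pow^{k}$ contains all values $2^{a}\bmod(2^{b}+1)$. Lemma~\ref{lem:sparsity} then bounds the number of such values in $\{0,\ldots,M\}$ by $Cg(M)^K$. However, for fixed $b$ and varying $a$, the values $2^a\bmod(2^b+1)$ are the $2b$ residues $\pm2^j$, which gives only about $\log M$ values. A richer family is needed: $R(2^{a},y)$ with $y$ the free input itself gives $x$ unrestricted. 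The key step is to pick arguments that are dyadic after outermost-exponential replacement and yet give all residues. The choice $R\bigl(\pred(2^{a}),\,2^{b}\bigr)=2^{\min(a,b)}-1$ is again sparse.

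Since the sparsity family itself is too weak, the cleanest approach returns to a decoder after all. We have $n = (2^n\cdot n)\ldots$, and we expect the main obstacle to be exactly finding a decoder expression, using $\lfloor\cdot/\cdot\rfloor$ and $\bmod$, that outputs $n$ from $2^n$. The candidate we would try first is
\[
  n=\Bigl(\bigl\lfloor (2^{2^n}-1)/(2^{\,?}-1)\bigr\rfloor\Bigr)\bmod\ldots,
\]
that is, to write $n$ as a digit sum modulo $2^n-1$. For example, the base-$2^{n}$ expansion of $n$ itself, followed by remainder modulo one less than the base, sums digits. If this compiles to a term, we obtain $T(2^n)=n$ and the contradiction with Theorem~\ref{thm:main}; checking that it compiles is the step we expect to be hardest.
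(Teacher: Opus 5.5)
Your proposal does not contain a proof. Each concrete candidate you try fails in one of two ways. Some yield a family of remainders that is itself sparse, as you correctly observe for $R(2^a,2^b+1)$ and for $R(\pred(2^a),2^b)=2^{\min(a,b)}-1$, so Corollary~\ref{cor:exp-sparsity} gives no contradiction. The others are left unverified.

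The final candidate is circular. The base-$2^n$ expansion of $n$ is just the single digit $n$, since $n<2^n$, so reducing it modulo $2^n-1$ presupposes the output you want. A genuine digit-sum decoder would need a number such as $(2^{sn}-1)/(2^s-1)$ with $2^s-1>n$. That requires the power $x^s=2^{sn}$ with $s$ growing with $n$. Obtaining it from $x=2^n$ needs either the exponent $n$ or a variable-base power, and you have not shown how to get either from $R$ and the $S$-operations. So the step that carries the whole argument is missing: an explicit unary term $T$ with $T(2^n)=n$.

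The missing idea is that no decoder has to be built by hand. Use the known basis \eqref{eq:remainder-basis} of Prunescu, Sauras-Altuzarra, and Shunia, $\langle x+y,\ x\bmod y,\ 2^x\rangle=\E$. Suppose $x\bmod y$ belonged to $\langle x+y,xy,\lfloor x/y\rfloor,2^x\rangle=S$ (Lemma~\ref{lem:multiplication}). Then $S$ would contain all three generators of that basis and hence all of $\E$. In particular it would contain the elementary function $\ell$ of \eqref{eq:ell}, which satisfies $\ell(2^n)=n$. That contradicts the decoder part of Theorem~\ref{thm:main}, equivalently the strict inclusion $S\subsetneq\E$. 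That part is proved from Corollary~\ref{cor:exp-sparsity} independently of the present corollary, so there is no circularity. This is exactly the paper's two-line argument. Completeness of the remainder basis guarantees a decoder term abstractly, so your explicit search, and your search for a dense two-variable family, are unnecessary.
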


\begin{proof}
If remainder were representable, $S$ would contain the
basis $\{+,\bmod,2^x\}$ of Prunescu, Sauras-Altuzarra, and
Shunia~\cite[Theorem~1]{PSS2026}, and hence all of $\E$. This contradicts the strict inclusion $S\subsetneq\E$ proved above.
\end{proof}

\begin{corollary}\label{cor:valuation-hamming}
Neither the two-adic valuation $\nu_2$, extended by $\nu_2(0)=0$, nor the
binary Hamming-weight function $\HW$ belongs to $S$.
\end{corollary}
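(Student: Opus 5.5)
The plan is to reduce both claims to the decoder obstruction of Theorem~\ref{thm:main}. If $\nu_2$ or $\HW$ belonged to $S$, we would build from it a unary term $T\in S$ with $T(2^n)=n$ for all $n$. Corollary~\ref{cor:exp-sparsity} and the proof of Theorem~\ref{thm:main} forbid such a term.

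For the two-adic valuation, the point is that $\nu_2(2^n)=n$ for every $n\geq0$, including $n=0$ since $\nu_2(1)=0$. So if $\nu_2\in S$, the term $T(x)=\nu_2(x)$ is itself a decoder. Nothing more is needed beyond noting that the convention $\nu_2(0)=0$ plays no role, because $0\notin\Pow$.

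For Hamming weight, we need an $S$-term turning $2^n$ into a number of Hamming weight exactly $n$. The natural candidate is $\pred(2^n)=2^n-1$, whose binary expansion consists of $n$ ones, so $\HW(2^n-1)=n$ for all $n\geq0$ (with $\HW(0)=0$ at $n=0$). By Lemma~\ref{lem:basic}, $\pred\in S$, so $T(x)=\HW(\pred(x))$ would lie in $S$ by substitution and satisfy~\eqref{eq:decoder}, a contradiction.

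I expect no real obstacle. The only care needed is to check the boundary case $n=0$ in both decoders and to confirm that only closure of $S$ under substitution together with Lemma~\ref{lem:basic} is used. Alternatively, one could cite the last step of Theorem~\ref{thm:main} directly: the sparsity bound excludes any unary term whose range on $\Pow$ contains all of $\N$, and both $\nu_2$ and $\HW\circ\pred$ restricted to $\Pow$ are onto $\N$.
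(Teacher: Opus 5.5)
Your proposal is correct and is essentially the paper's own proof: the identities $\nu_2(2^n)=n$ and $\HW(\pred(2^n))=n$, with $\pred\in S$ from Lemma~\ref{lem:basic}, turn membership of either function in $S$ into a decoder excluded by Theorem~\ref{thm:main}. Your explicit check of the case $n=0$ is a harmless addition that the paper leaves implicit.
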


\begin{proof}
The identities $\nu_2(2^n)=n$ and $\HW(\pred(2^n))=n$ would give a decoder
for $2^n$ if the respective functions belonged to $S$. Predecessor is
available by Lemma~\ref{lem:basic}, so both conclusions contradict
Theorem~\ref{thm:main}.
\end{proof}

\begin{remark}
The result allows arbitrary fixed constants, fixed straight-line programs,
and arbitrary nesting of $2^x$. The integer exponent differences in
Lemma~\ref{lem:rational} remain available for any one fixed integer base,
as the next section makes precise. They do not constrain an operation
whose base varies with the inputs; Section~\ref{sec:variable-base} proves
completeness after that enlargement. Adjoining truncated subtraction also
recovers $\E$, since it supplies remainder by
$x\bmod y=x\dotminus y\lfloor x/y\rfloor$ and
\eqref{eq:remainder-basis} then applies.
\end{remark}

\section{Every fixed exponential base remains incomplete}\label{sec:fixed-bases}

The negative result does not depend on the choice of base two. In fact,
the sparsity estimate is uniform over all integer bases. For $b\geq2$, put
$\Pow_b=\{b^e:e\in\N\}$, retaining $g(M)=1+\log_2(M+1)$.

\begin{theorem}[Uniform fixed-base sparsity]\label{thm:fixed-base}
For every fixed term $A(z_1,\ldots,z_k)$ formed from natural-number
constants, addition, multiplication, and totalized integer division,
there exist $C_A\geq1$ and $K_A\in\N$ such that
\[
  \#\bigl(A(\Pow_b^k)\cap\{0,\ldots,M\}\bigr)
  \leq C_Ag(M)^{K_A}
\]
for every integer $b\geq2$ and every integer $M\geq1$. The constants are
independent of $b$.
\end{theorem}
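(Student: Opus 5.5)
The plan is to rerun the structural induction of Lemma~\ref{lem:sparsity} verbatim, with $\Pow$ replaced by $\Pow_b$. The only place where the base actually enters is Lemma~\ref{lem:rational}, so the key step is a base-$b$ version of that lemma whose bound does not depend on $b$. Everything else carries over unchanged. Variables take at most $1+\lfloor\log_b M\rfloor\leq g(M)$ values in $\{0,\ldots,M\}$. The product bounds for $+$ and $\times$ do not involve the inputs. Lemma~\ref{lem:rounding} is stated for arbitrary nonnegative integer inputs. The cutting step at threshold $H=100(s+1)(M+1)^2$ and the construction of $P,Q$, with at most $J$ monomials and coefficients at most $B(M)=c(H+1)^d$, depend only on the term $A$. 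So the constants $C_0,K_0,J,c,d$ are independent of $b$ once the induction hypothesis is uniform in $b$, and it is because we carry exactly this uniformity through the induction.

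For the base-$b$ analogue of Lemma~\ref{lem:rational}, I would write $\mathbf z=(b^{e_1},\ldots,b^{e_k})$ and normalize by the top monomials exactly as before. This gives $R=b^\Delta\overline P/\overline Q$ with $1\leq\overline P,\overline Q\leq JB$. It also gives $-\log_b(2JB)\leq\Delta\leq\log_b((M+2)JB)$, so $\Delta$ takes at most $O(\log_2((M+2)B))$ values, and even fewer for large $b$. Normalized exponents of the other monomials are negative integers $t$, and each such monomial has size $b^{t}\leq 2^{t}$. So I would discard those with $b^{t}<2^{-L}$, which keeps only $t\geq -L/\log_2 b\geq -L$, with the same $L$ as before. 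The truncation error is then at most $JB2^{-L}$ exactly as in the dyadic proof. The records (retained exponent in $\{-L,\ldots,0\}$ or ``discarded'') are at most $L+2$ per monomial, independent of $b$. This gives $\#\mathcal F\leq C_J(1+\log_2((M+2)B))^{2J+1}$ uniformly in $b$.

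With that lemma, the counting step goes through unchanged. Each approximation $R_*$ accounts for at most three integer outputs, and we multiply by the configuration count $C_0g(M)^{K_0}$, which is $b$-independent by the induction hypothesis applied to cut subtrees. This yields $N_A(M)\leq 1+3C_0C_1g(M)^{K_0+2J+1}$ for all $b$.

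The main obstacle I expect is making sure no hidden $b$-dependence slips in. One possible source is the range bound on $\Delta$: it is here that $\log_b$ must be bounded by $\log_2$. Another is the truncation threshold: we must truncate by real size $2^{-L}$ rather than by exponent count, so that large $b$ helps rather than hurts. A third is the bound on $g(H)$ in terms of $g(M)$, which is purely about $M$. I would check each of these explicitly, and otherwise cite the dyadic proof.
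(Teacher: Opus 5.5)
Your proposal is correct and matches the paper's proof. Like the paper, you rerun the induction of Lemma~\ref{lem:sparsity} simultaneously for all $b$, and the only base-dependent step is a base-$b$ version of Lemma~\ref{lem:rational}, made uniform in $b$ because base-$b$ logarithms are at most base-$2$ logarithms. The one cosmetic difference is the truncation rule: the paper discards normalized exponents below $-L_b$, where $L_b=\lceil\log_b(200(M+2)(JB)^3)\rceil\leq L$, whereas you keep the dyadic $L$ and discard monomials with $b^t<2^{-L}$; both give at most $L+2$ records per monomial and an error of at most $1/100$.
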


\begin{proof}
We first verify the base dependence in Lemma~\ref{lem:rational}. At inputs
$z_i=b^{e_i}$, replace $2^{\alpha\cdot\mathbf e}$ by
$b^{\alpha\cdot\mathbf e}$ and normalize by the largest monomial exponents
as before. The normalized sums still lie in $[1,JB]$, where $B$ is the
coefficient bound, and their leading exponent difference is an integer
$\Delta$ satisfying
\[
  -\log_b(2JB)\leq\Delta\leq\log_b((M+2)JB).
\]
Choose
\[
  L_b=\left\lceil\log_b\bigl(200(M+2)(JB)^3\bigr)\right\rceil.
\]
Discarding normalized exponents below $-L_b$ gives the same approximation
error, since
\[
  |R-R_*|\leq2(M+2)(JB)^3b^{-L_b}\leq\frac1{100}.
\]
The record count becomes
\[
  \bigl(3+\log_b(2(M+2)(JB)^2)\bigr)(L_b+2)^{2J}.
\]
Every logarithm has argument at least one, so $\log_b t\leq\log_2t$
shows that the bound in Lemma~\ref{lem:rational} holds with exactly the
same constant for all $b\geq2$.

Lemma~\ref{lem:rounding} involves only nonnegative integer values and is
unchanged. Now repeat the structural induction of
Lemma~\ref{lem:sparsity} simultaneously for all bases. A variable has at
most $1+\lfloor\log_b M\rfloor\leq1+\lfloor\log_2 M\rfloor$ bounded
values. The addition and multiplication steps have no base dependence.
At a quotient, use the same threshold $H$ from~\eqref{eq:H}. The induction
hypothesis counts cut values uniformly in $b$, and the coefficient bound
\eqref{eq:B} depends only on the term and $H$. Applying the uniform
rational-function estimate just established completes the quotient step,
with constants independent of $b$.
\end{proof}

\begin{corollary}\label{cor:fixed-base-incomplete}
For every fixed integer $b\geq2$,
\[
  S_b:=\langle x+y,\lfloor x/y\rfloor,b^x\rangle
  =\langle x+y,xy,\lfloor x/y\rfloor,b^x\rangle
  \subsetneq\E.
\]
No unary $S_b$-term satisfies $T(b^n)=n$ for all $n\in\N$. Consequently
neither the integer base-$b$ logarithm nor the function
$\nu_b(x)=\max\{e:b^e\mid x\}$ for $x>0$, with $\nu_b(0)=0$, belongs to
$S_b$.
\end{corollary}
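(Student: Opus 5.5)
The proof will follow the base-two argument of Section~\ref{sec:separation} step by step, with Theorem~\ref{thm:fixed-base} used in place of Lemma~\ref{lem:sparsity}. It has three parts: multiplication, the decoder obstruction, and the two named functions.

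\emph{Multiplication.} I first show that multiplication lies in $S_b$, which gives the equality of the two term classes. I would reuse the proof of Lemma~\ref{lem:multiplication} with $N=b^{2x+2y+2}$. Since $xy\le 2^{x+y}\le b^{x+y}$, we get $xy(xy+1)<N$, so the inequality $m\le N/\lfloor N/m\rfloor<m+1$ still applies. The zero cases are handled exactly as before.

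\emph{No decoder.} Suppose $T$ is a unary $S_b$-term. Replace every outermost occurrence of $b^x$ by a fresh variable, as in Corollary~\ref{cor:exp-sparsity}, and rename the input variable $z_0$. On input $b^n$, every leaf of the resulting term $A$ then takes a value in $\Pow_b$. Hence $\{T(b^n)\}\subseteq A(\Pow_b^{k+1})$, and Theorem~\ref{thm:fixed-base} bounds the number of these values below $M$ by $Cg(M)^K$. A decoder would make this range contain $\{0,\dots,M\}$. Taking $M=2^t$ then gives the same contradiction as in the proof of Theorem~\ref{thm:main}.

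\emph{Strict inclusion and the named functions.} The generators are elementary: $b^a=\prod_{j=1}^a b$, and the other generators were treated in the proof of Theorem~\ref{thm:main}. So $S_b\subseteq\E$. For the two functions:
\begin{itemize}
\item The base-$b$ logarithm $\ell_b(x)=\sum_{j=1}^{x}\chi(b^j\le x)$ is elementary and satisfies $\ell_b(b^n)=n$.
\item The valuation $\nu_b$ is elementary as a bounded sum of divisibility tests, and $\nu_b(b^n)=n$.
\end{itemize}
Either function would therefore be a decoder, so neither lies in $S_b$, and the inclusion $S_b\subsetneq\E$ is strict. The only step needing care is the multiplication identity, which depends on $b\ge2$ through $2^{x+y}\le b^{x+y}$. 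Everything else follows directly from the uniform sparsity theorem.
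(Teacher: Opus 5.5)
Your proposal is correct and follows the paper's proof essentially step for step: multiplication via the numerator $b^{2x+2y+2}$ in Lemma~\ref{lem:multiplication}, then abstraction of the outermost $b$-exponential subterms followed by Theorem~\ref{thm:fixed-base} to rule out a decoder, and finally the elementary function $\ell_b$ to witness strict inclusion. Your extra check that $\nu_b\in\E$ is harmless but not needed, since excluding $\nu_b$ from $S_b$ only uses $\nu_b(b^n)=n$; also, strictly speaking, only the variable leaves of $A$, not its constants, take values in $\Pow_b$.
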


\begin{proof}
Replacing $2^{2x+2y+2}$ by $b^{2x+2y+2}$ in
Lemma~\ref{lem:multiplication} gives a term for multiplication, since the
numerator only increases. Abstract the outermost $b$-exponential subterms
of a proposed decoder as independent variables, exactly as in
Corollary~\ref{cor:exp-sparsity}. On input $b^n$, all these variables take
values in $\Pow_b$. Theorem~\ref{thm:fixed-base} would then bound the
decoder's range by a polylogarithmic function, contradicting its containing
every natural number.

All the generators are elementary. The elementary function
\[
  \ell_b(x)=\sum_{j=1}^{x}\chi(b^j\leq x)
\]
equals $\lfloor\log_bx\rfloor$ for $x>0$, is zero at $x=0$, and satisfies
$\ell_b(b^n)=n$. Its absence proves strict inclusion. The identity
$\nu_b(b^n)=n$ similarly excludes $\nu_b$.
\end{proof}

Thus changing one fixed exponential base cannot repair function
incompleteness. Binary exponentiation $x^y$ is a different operation:
its base varies with the inputs, so its values cannot be abstracted as
elements of one fixed set $\Pow_b$.

\section{Variable-base exponentiation and a quotient basis}
\label{sec:variable-base}

The restriction to a fixed exponential base is essential.  In this section
we prove Theorem~\ref{thm:variable-basis}, namely
\[
  \mathcal T:=\langle x+y,\ \lfloor x/y\rfloor,\ x^y\rangle=\E.
\]
The proof first represents elementary functions as exponents, uniformly
in a variable base, and then recovers their values by a decreasing
geometric sequence.  It does not use the Hamming-weight or predicate
constructions below.

Since $S\subseteq\mathcal T$, multiplication and predecessor are available
by Lemmas~\ref{lem:multiplication} and~\ref{lem:basic}.  Products and
$\pred(t)$ will therefore be used as term abbreviations.  We also use the
$\mathcal T$-term
\begin{equation}\label{eq:variable-encoded-remainder}
  \mathcal R(c,u,v)
  :=\left\lfloor
       \frac{c^u}{c^{\,v\lfloor u/v\rfloor}}
     \right\rfloor
  =c^{\,u\bmod v}\qquad(c\geq1).
\end{equation}
This identity includes $v=0$: the exponent in the denominator is then
zero, and our convention is $u\bmod0=u$.  All bases used in this section
are positive, so the convention for $0^0$ is immaterial.

Call a function $f:\N^k\to\N$ \emph{uniformly power-encoded in
$\mathcal T$} if there is a $\mathcal T$-term $F(c,\mathbf x)$ such that
\[
  F(c,\mathbf x)=c^{f(\mathbf x)}
  \qquad(c\geq1,\ \mathbf x\in\N^k).
\]
The argument $c$ is an additional variable, not a fixed base.

\begin{lemma}[Uniform power encoding]\label{lem:uniform-power-encoding}
Every elementary function is uniformly power-encoded in $\mathcal T$.
Given a term for it over the basis~\eqref{eq:remainder-basis}, a uniform
encoding can be constructed effectively.
\end{lemma}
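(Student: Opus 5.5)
We argue by structural induction on a term over the basis~\eqref{eq:remainder-basis}, that is, on terms built from constants, projections, $x+y$, $x\bmod y$ and $2^x$. By Theorem~1 of~\cite{PSS2026}, every elementary function has such a term, so it suffices to show that uniformly power-encoded functions contain the constants and projections and are closed under these three operations. Each step will produce an explicit $\mathcal T$-term, which gives effectiveness. The base cases are immediate. For a constant $a$ we take $c^a$. For a projection we take $c^{x_i}$, which is a $\mathcal T$-term since $x^y$ is a generator.

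For addition, if $F=c^f$ and $G=c^g$, then $F\cdot G=c^{f+g}$, and multiplication is available by Lemma~\ref{lem:multiplication}. For $2^x$, given $F(c,\mathbf x)=c^{f}$ we need $c^{2^f}$. Since the encoding is uniform in the base, we may substitute a new base. From $F(2,\mathbf x)=2^f$ we get $c^{F(2,\mathbf x)}=c^{2^f}$ using $x^y$ directly. The point is that the encoded exponent $2^f$ is itself an actual numerical value obtained by specialising the base.

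The main obstacle is remainder: from $c^f$ and $c^g$ we must produce $c^{f\bmod g}$ without ever holding $f,g$ as numbers. I will follow the hint in the introduction and work in the base $D=c^{}$-dependent auxiliary base $N=(c+1)$ (or a suitably large base such as $N=2$ specialised, together with a fresh large base). Specialising gives the actual numbers $2^f=F(2,\mathbf x)$ and $2^g=G(2,\mathbf x)$. Remark~\ref{rem:nesting} yields
\[
\bigl((2^f-1)\bmod(2^g-1)\bigr)+1=2^{f\bmod g},
\]
and this number equals $\lfloor\,\cdot\,\rfloor$-expressible quantities once we have a quotient. What we need instead is $c^{f\bmod g}$. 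The plan is to produce the numerical quotient $q=\lfloor f/g\rfloor$ in encoded form, and then use $\mathcal R$ from~\eqref{eq:variable-encoded-remainder}. We have $c^{f\bmod g}=\lfloor c^f/(c^{g})^{q}\rfloor$, so it suffices to have $(c^g)^q$, that is, $q$ available as an exponent value.

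To obtain $q$, take a base $m=c^{g}$-independent large base $m=2^{g}$ computed as $G(2,\mathbf x)$, and form the geometric-series quotient
\[
\Big\lfloor \frac{m^{\,q'}-1}{m-1}\Big\rfloor
\]
with $m^{q'}$ obtained through $F$ and $G$ specialised at $m$. More precisely, using $F(m,\mathbf x)=m^f$ and dividing appropriately by powers $m^{g}$, we obtain $\lfloor (M^f-1)/(M^g-1)\rfloor$ for a fresh base $M$. This equals $\sum_{j<q}M^{f-g(j+1)}$ plus lower-order terms. Reducing this modulo $M-1$ yields the digit sum, which is essentially $q$ when $M$ is large compared with $q$; here $M$ will be chosen as $M=2^{2^{f}}$-type values built from $F$. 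The reduction modulo $M-1$ is again only needed in encoded form: by the same identity as in Remark~\ref{rem:nesting}, with $M=c'^{\,\cdot}$, we get $c'^{(\cdot)\bmod(M-1)}$ via $\mathcal R$. Here the inner quantity is an actual number, so $\mathcal R(c,X,M-1)=c^{X\bmod(M-1)}$ applies directly, with $X$ the geometric quotient. This gives $c^{q}$, with correction terms handled by the predecessor. Then $(c^g)^q$ is obtained as $\mathcal R$-style exponentiation $G(c,\mathbf x)^{q}$. This requires $q$ as a number, but the specialised $c^q$ at a concrete base gives only an encoding, so instead I will compute the quotient $\lfloor c^f/c^{g q}\rfloor$ by exponentiating $c$ to the actual number $X\bmod(M-1)$ scaled appropriately. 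Checking that the carries in the geometric quotient do not disturb the digit sum, and choosing $M$ large enough uniformly in $c$, is the delicate part I expect to take the most care.
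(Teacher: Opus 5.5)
Your overall plan matches the paper's: induction over $\{+,\bmod,2^x\}$, $F(c)G(c)$ for addition, $c^{F(2,\mathbf x)}$ for $2^x$, and the geometric-series quotient $X=\lfloor\pred(F(M,\mathbf x))/\pred(G(M,\mathbf x))\rfloor$ reduced modulo $M-1$ inside $\mathcal R$ to obtain $c^{q}$, where $q=\lfloor f/g\rfloor$. The remainder step is not finished, though, and the workaround you propose for its last move is not a legitimate term. You say that $(c^g)^q$ ``requires $q$ as a number,'' and then propose exponentiating $c$ to ``the actual number $X\bmod(M-1)$ scaled appropriately.'' That residue is not a term value, since remainder is exactly what is not yet known to lie in $\mathcal T$. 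Scaling it by $g$ would also require $g$ as a number.

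The missing observation is one you already used for $2^x$: the base argument may itself be an encoding. Because $\mathcal R(c',u,v)=c'^{\,u\bmod v}$ holds for every $c'\geq1$, the term $\mathcal R(G(c,\mathbf x),X,M-1)$ equals $(c^g)^q=c^{gq}$. Equivalently one can use $G(\mathcal R(c,X,M-1),\mathbf x)$, which is how the paper encodes products. Then, since $gq\leq f$,
\[
\left\lfloor\frac{F(c,\mathbf x)}{\mathcal R(G(c,\mathbf x),X,M-1)}\right\rfloor=c^{\,f-gq}=c^{\,f\bmod g}
\]
holds exactly. Completed this way, your direct route is slightly leaner than the paper's. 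The paper first isolates closure under $\lfloor f/g\rfloor$ and then writes $f\bmod g=f\dotminus g\lfloor f/g\rfloor$, using separate encodings of products and of truncated subtraction ($W+\lfloor1/(W+1)\rfloor$ with $W=\lfloor F/G\rfloor$). You never need the truncated-subtraction correction, because your division is exact.

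The verification you defer as ``delicate'' must still be carried out, but it is simpler than you fear. There are no carries and no lower-order terms, and no uniformity in $c$ is involved, since $M$ does not depend on $c$. Take $M=F(2,\mathbf x)+2=2^f+2$. If $g>0$ and $f=qg+r$ with $0\leq r<g$, then $M^f-1=M^r(M^g-1)\sum_{i<q}M^{ig}+(M^r-1)$. Hence $X=M^r\sum_{i<q}M^{ig}$ exactly, with the discarded part $(M^r-1)/(M^g-1)$ in $[0,1)$. Since $M\equiv1\pmod{M-1}$ and $q\leq f<M-1$, this gives $X\bmod(M-1)=q$. If $g=0$, then $\pred(G(M,\mathbf x))=0$, so $X=0$ by totalization; in any case $G(c,\mathbf x)=1$, giving $c^f=c^{f\bmod0}$. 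With these two points supplied, and the exploratory fragments about $N=c+1$ and $m=2^g$ discarded, your argument becomes a complete proof.
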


\begin{proof}
Constants and projections have the required encodings.  We establish
closure properties sufficient to translate a term over
$\{+,\bmod,2^x\}$.  Let $F(c,\mathbf x)=c^{f(\mathbf x)}$ and
$G(c,\mathbf x)=c^{g(\mathbf x)}$ be encodings already constructed.
Suppress $\mathbf x$ when no confusion can arise.

Addition and multiplication are encoded, respectively, by
\[
  F(c)G(c),\qquad G(F(c,\mathbf x),\mathbf x).
\]
The latter substitution is legitimate because $F(c,\mathbf x)\geq1$.
For truncated subtraction put $W(c)=\lfloor F(c)/G(c)\rfloor$.  Then
\begin{equation}\label{eq:encoded-truncated-subtraction}
  W(c)+\left\lfloor\frac1{W(c)+1}\right\rfloor
  =c^{\,f\dotminus g}.
\end{equation}
For $c\geq2$, the quotient is zero when $f<g$ and equals $c^{f-g}$
otherwise; for $c=1$, both sides of~\eqref{eq:encoded-truncated-subtraction}
equal $1$.

The main step is closure under totalized integer quotient.  Set
\begin{equation}\label{eq:encoded-quotient-construction}
  B=F(2)+2,\qquad V=\pred(B),\qquad
  Q=\left\lfloor
       \frac{\pred(F(B))}{\pred(G(B))}
     \right\rfloor.
\end{equation}
These are $\mathcal T$-terms in the original arguments.  We claim that
\begin{equation}\label{eq:encoded-quotient-residue}
  Q\bmod V=\lfloor f/g\rfloor.
\end{equation}
If $g=0$, the denominator defining $Q$ is zero, so both sides are zero.
If $g>0$, write $f=qg+r$, where $0\leq r<g$.  The exact geometric-series
identity gives
\[
  \left\lfloor\frac{B^f-1}{B^g-1}\right\rfloor
  =B^r\sum_{i=0}^{q-1}B^{ig}.
\]
Here the sum is empty when $q=0$.  The discarded fractional part is
$(B^r-1)/(B^g-1)$, which lies in $[0,1)$.  Reduction modulo $B-1$ therefore
gives $Q\equiv q\pmod{B-1}$.  Moreover,
\[
  q\leq f<2^f+1=B-1=V,
\]
which proves~\eqref{eq:encoded-quotient-residue}.  Applying
\eqref{eq:variable-encoded-remainder} to the actual term values $Q,V$
now yields the required quotient encoding:
\begin{equation}\label{eq:uniform-encoded-quotient}
  \mathcal R(c,Q,V)=c^{\lfloor f/g\rfloor}.
\end{equation}
The residue in~\eqref{eq:encoded-quotient-residue} is used only to verify
this identity; the constructed term uses its encoded form
\eqref{eq:uniform-encoded-quotient}.

Since
\[
  f\bmod g=f\dotminus\bigl(g\lfloor f/g\rfloor\bigr)
\]
also holds when $g=0$, the preceding constructions encode remainder.
Finally, base-two exponentiation is encoded by
\[
  c^{F(2,\mathbf x)}=c^{\,2^{f(\mathbf x)}}.
\]
Induction over a term in the established
basis~\eqref{eq:remainder-basis} proves the assertion.  Every step is an
explicit finite substitution, so this translation is effective.
\end{proof}

The next elementary observation recovers a value from an encoded count.
The sequence is chosen so that the first step below the threshold $a+1$
remains strictly above $a$.

\begin{lemma}[Geometric recovery]\label{lem:geometric-recovery}
Let $a\in\N$ and let $B\geq a+2$ be an integer.  Define
\[
  n=\#\left\{0\leq j<B^2:
          B(B-1)^j\geq(a+1)B^j
        \right\}.
\]
Then
\begin{equation}\label{eq:geometric-recovery}
  a=\left\lfloor\frac{B(B-1)^n}{B^n}\right\rfloor.
\end{equation}
\end{lemma}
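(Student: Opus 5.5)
The plan is to study the real sequence $s_j=B(1-1/B)^j=B(B-1)^j/B^j$. The condition defining $n$ says exactly $s_j\geq a+1$. Since $B\geq a+2\geq2$, we have $0<1-1/B<1$, so $s_j$ is strictly decreasing in $j$. Hence the set of indices $j$ with $s_j\geq a+1$ is an initial segment $\{0,1,\ldots,m-1\}$ of $\N$. Its intersection with $\{0,\ldots,B^2-1\}$ therefore has $n=\min(m,B^2)$ elements, and the target identity \eqref{eq:geometric-recovery} reads $a=\lfloor s_n\rfloor$. It thus suffices to prove two things: that $m\leq B^2$, so that $n=m$ is the first index with $s_n<a+1$; and that $s_n>a$ at this index, so that $\lfloor s_n\rfloor=a$.

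For the first point, note that $s_0=B\geq a+2>a+1$, so $m\geq1$. The aim is then to show $s_{B^2}<a+1$; it is enough to show $s_{B^2}<1$. Using $(1-1/B)^B\leq e^{-1}$, I would bound
\[
  s_{B^2}\leq Be^{-B}<1
\]
for every $B\geq1$. This holds because $e^B>B$. (If one prefers to avoid $e$, the weaker bound $(1-1/B)^B\leq1/2$ for $B\geq2$ gives $s_{B^2}\leq B2^{-B}\leq1/2$.) Consequently $m\leq B^2$, and so $n=m$.

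For the second point, the index $n$ satisfies $n\geq1$ and $s_{n-1}\geq a+1$, so
\[
  s_n=s_{n-1}\Bigl(1-\frac1B\Bigr)\geq(a+1)\Bigl(1-\frac1B\Bigr)=a+1-\frac{a+1}{B}.
\]
Since $B\geq a+2>a+1$, the subtracted term is strictly less than $1$, which gives $s_n>a$. Combined with $s_n<a+1$ from the definition of $n$, this yields $a<s_n<a+1$. Finally, $s_n=B(B-1)^n/B^n$ is exactly the rational number inside the floor, with a positive denominator, so the floor equals $a$.

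The only step needing any care is the uniform bound $s_{B^2}<a+1$. It guarantees that the truncation at $B^2$ does not cut the count short. Once the exponential-decay estimate above is in place, that step is routine. The crossing step itself is where the hypothesis $B\geq a+2$ is used, since it makes one multiplicative step of size $1-1/B$ lose less than $1$ in absolute value.
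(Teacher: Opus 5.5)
Your proposal is correct and follows essentially the same argument as the paper. You use the strictly decreasing sequence $B(1-1/B)^j$, a decay bound showing that the term at index $B^2$ is below $1$ so the truncation at $B^2$ is harmless, and the one-step estimate $s_n\geq(a+1)(1-1/B)>a$. The paper obtains the decay bound from Bernoulli's inequality rather than from $(1-1/B)^B\leq e^{-1}$, an immaterial difference that your own fallback bound $(1-1/B)^B\leq 1/2$ essentially reproduces.
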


\begin{proof}
Put $t_j=B(1-1/B)^j$.  This sequence strictly decreases and
$t_0=B>a+1$.  Bernoulli's inequality gives
$\bigl(B/(B-1)\bigr)^{B-1}\geq2$, and hence
\[
  t_{B^2}\leq B\,2^{-B}<1\leq a+1.
\]
Consequently the counted indices form the initial segment
$0,\ldots,n-1$, with $1\leq n\leq B^2$, and
\[
  t_n<a+1\leq t_{n-1}.
\]
Multiplication of the second inequality by $1-1/B$ gives
\[
  t_n\geq\left(1-\frac1B\right)(a+1)
      =a+1-\frac{a+1}{B}>a.
\]
Thus $a<t_n<a+1$, proving~\eqref{eq:geometric-recovery}.
The argument includes $a=0$ and $B=2$.
\end{proof}

\begin{proof}[Proof of Theorem~\ref{thm:variable-basis}]
All generators of $\mathcal T$ are elementary, so $\mathcal T\subseteq\E$.
Conversely, fix $f\in\E$.  By
Lemma~\ref{lem:uniform-power-encoding}, choose an encoding
$F(c,\mathbf x)=c^{f(\mathbf x)}$, and set
\[
  B(\mathbf x)=F(2,\mathbf x)+2=2^{f(\mathbf x)}+2.
\]
The first expression shows that $B\in\mathcal T$, without presupposing
$f\in\mathcal T$; the second shows that $B\geq f+2$.
Consider the elementary function
\begin{equation}\label{eq:recovery-count}
  N(\mathbf x)=
  \#\left\{0\leq j<B(\mathbf x)^2:
       B(\mathbf x)\bigl(B(\mathbf x)-1\bigr)^j
       \geq\bigl(f(\mathbf x)+1\bigr)B(\mathbf x)^j
     \right\}.
\end{equation}
It is elementary because its defining inequality is elementary and the
count is a bounded sum.  Apply
Lemma~\ref{lem:uniform-power-encoding} to $N$ to obtain a
$\mathcal T$-term $H(c,\mathbf x)=c^{N(\mathbf x)}$.  Then
Lemma~\ref{lem:geometric-recovery} gives
\begin{equation}\label{eq:final-variable-base-decoder}
  f(\mathbf x)=
  \left\lfloor
    \frac{B(\mathbf x)\,
          H(\pred(B(\mathbf x)),\mathbf x)}
         {H(B(\mathbf x),\mathbf x)}
  \right\rfloor.
\end{equation}
Both bases supplied to $H$ are positive, and its denominator is at least
$1$.  The right-hand side is a $\mathcal T$-term, proving
$\E\subseteq\mathcal T$.

The bounded count in~\eqref{eq:recovery-count} serves to specify the
elementary function whose encoding is required.  Using the established
arithmetic basis, it is converted to a finite term and then translated
by Lemma~\ref{lem:uniform-power-encoding}.  Thus no bounded summation or
search remains as an operation in~\eqref{eq:final-variable-base-decoder}.
The construction is effective, although no claim of a short resulting
term is made.
\end{proof}

\begin{remark}[Recovering integer remainder]\label{rem:remainder-recovery}
For the original remainder question, the recovery construction can use
the simpler bound $B=x+2$. The function
\[
  N(x,y)=\#\left\{0\leq j<B^2:
       (x+1)B^j\leq B(B-1)^j
                    +y\lfloor x/y\rfloor B^j\right\}
\]
is elementary. If $H(c,x,y)=c^{N(x,y)}$ is its encoding from
Lemma~\ref{lem:uniform-power-encoding}, then
\[
  x\bmod y=
  \left\lfloor\frac{(x+2)H(x+1,x,y)}{H(x+2,x,y)}\right\rfloor.
\]
Indeed, the counted inequality is precisely
$B(B-1)^j\geq((x\bmod y)+1)B^j$, and
$B\geq(x\bmod y)+2$. This includes $y=0$. Here $H$ is a finite term
obtained by the encoding construction; the displayed bounded count
specifies that term's encoded value.
\end{remark}

\begin{corollary}\label{cor:binary-power-separation}
For every fixed integer $b\geq2$, binary exponentiation $x^y$ does not
belong to $S_b$.
\end{corollary}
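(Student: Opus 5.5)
The plan is to argue by contradiction, using the completeness result just proved. Suppose $x^y\in S_b$ for some fixed $b\geq 2$. By Theorem~\ref{thm:variable-basis}, every elementary function has a term over $\{x+y,\lfloor x/y\rfloor,x^y\rangle$. Each of these generators would then lie in $S_b$: addition and integer division directly, and $x^y$ by assumption. Since $S_b$ is closed under substitution, this would give $\E=\mathcal T\subseteq S_b$. That contradicts the strict inclusion $S_b\subsetneq\E$ of Corollary~\ref{cor:fixed-base-incomplete}.

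Concretely, the steps are as follows.
\begin{enumerate}
\item Note that $S_b$ is a substitution-closed class containing the constants, the projections, $x+y$ and $\lfloor x/y\rfloor$.
\item Observe that if $x^y\in S_b$, then $\langle x+y,\lfloor x/y\rfloor,x^y\rangle\subseteq S_b$, because any term over the larger basis is obtained by substituting the $S_b$-term for $x^y$ at each occurrence.
\item Invoke Theorem~\ref{thm:variable-basis} to conclude $\E\subseteq S_b$.
\item Contradict Corollary~\ref{cor:fixed-base-incomplete}.
\end{enumerate}
For a sharper witness, one can take the base-$b$ logarithm $\ell_b$, which is elementary and would therefore lie in $S_b$. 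This directly contradicts the no-decoder statement $T(b^n)\neq n$.

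There is no substantial obstacle; the argument only combines two earlier results. The one point to check is that the convention $0^0=1$ in $\mathcal T$ is the same operation being assumed to lie in $S_b$. This is consistent with how the statement is phrased, since $x^y$ here is the binary operation with $0^0=1$.

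As an independent sanity check, one could instead argue directly. The term $x^y$ evaluated at $x=n$ is not itself a decoder. However, with $x^y$ available, the decoder built from Remark~\ref{rem:remainder-recovery} gives remainder in $S_b$. That already contradicts the analogue of Corollary~\ref{cor:remainder}, in which the remainder basis would force completeness. The route through Theorem~\ref{thm:variable-basis} is shorter, and that is the one I would write.
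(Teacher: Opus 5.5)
Your argument is correct and is the paper's own proof: if $x^y\in S_b$, then Theorem~\ref{thm:variable-basis} gives $\E=\mathcal T\subseteq S_b$. This contradicts the strict inclusion $S_b\subsetneq\E$ of Corollary~\ref{cor:fixed-base-incomplete}.
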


\begin{proof}
Otherwise Theorem~\ref{thm:variable-basis} would imply
$\E=\mathcal T\subseteq S_b$, contradicting
Corollary~\ref{cor:fixed-base-incomplete}.
\end{proof}

\section{An encoded Hamming-weight function}\label{sec:hamming}

We now return to the fixed-base class $S$. Its predicate completeness
requires no use of the variable-base operation from the preceding section.
Recall that $\HW(x)$ is the number of ones in the binary expansion of $x$,
with $\HW(0)=0$. Although $\HW\notin S$ by
Corollary~\ref{cor:valuation-hamming}, we will construct $2^{2^{\HW(x)}}$ without constructing
$\HW(x)$ itself. The coefficient-extraction arguments are included to
check that no remainder or general subtraction is hidden in an $S$-term.

\begin{lemma}[A binomial coefficient as a residue]\label{lem:binomial}
For $0\leq b\leq a$, put $q=2^{2(a+2)}$ and
\begin{equation}\label{eq:binomial}
 U(a,b)=\left\lfloor
 \frac{q^{(a+1)^2+b+1}}{q^{a+2}-q-1}
 \right\rfloor.
\end{equation}
Then $U(a,b)\bmod q=\binom ab$, and both $U$ and $q$ are $S$-terms.
\end{lemma}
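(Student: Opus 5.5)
The plan is to expand $1/(q^{a+2}-q-1)$ as a power series in $1/q$. Write $D=q^{a+2}-q-1=q^{a+2}\bigl(1-(q+1)q^{-(a+2)}\bigr)$. Then
\[
  \frac{1}{D}=\sum_{m\geq0}\frac{(q+1)^m}{q^{(a+2)(m+1)}}.
\]
Expanding $(q+1)^m=\sum_b\binom mb q^b$ gives a series in powers of $q$. Multiplying by $N=q^{(a+1)^2+b+1}$, the term with index $m=a$ becomes
\[
  q^{(a+1)^2+b+1-(a+2)(a+1)}(q+1)^a=q^{b-a}(q+1)^a,
\]
since $(a+1)^2-(a+2)(a+1)=-(a+1)$. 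Its $q^0$-coefficient is $\binom{a}{a-b}=\binom ab$. So the first step is to organize $N/D$ as $\sum_m (q+1)^m q^{e_m}$ with $e_m=(a+1)^2+b+1-(a+2)(m+1)$. I will then identify which contributions land at the $q^0$ position, which lie at positive powers (the integer part above the units digit), and which lie at negative powers (the fractional part).

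The key points are as follows.

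First, for $m<a$ the exponent satisfies $e_m\geq(a+2)+b-a\geq1$ after the shift. Every monomial $q^{e_m+i}$ with $i\geq0$ is then a positive power of $q$, so these terms contribute multiples of $q$ and do not affect the residue.

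Second, the term $m=a$ contributes $q^{b-a}(q+1)^a$. Its monomials with nonnegative exponent give $\binom ab$ at $q^0$ plus multiples of $q$. Its remaining monomials are $\sum_{i<a-b}\binom ai q^{i-(a-b)}$, which are fractional.

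Third, for $m>a$ we have $e_m+m\leq b+1-(a+2)+\dots$. More precisely, the top exponent is $e_m+m=(a+1)^2+b+1-(a+2)(m+1)+m$, which decreases by $a+1$ per step in $m$. At $m=a+1$ it equals $b-a-1+(a+1)-(a+1)=b-a-1<0$ (this needs rechecking, though it is negative since $b\leq a$). So all these terms are fractional.

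The main obstacle is bounding the total fractional part $\Phi$ strictly below $1$. This is where the large base $q=2^{2(a+2)}$ is used. Every binomial coefficient is at most $2^m$. The fractional tail from $m=a$ is at most $2^a\sum_{j\geq1}q^{-j}\leq 2^{a+1}/q$. For $m>a$, I bound $(q+1)^m\leq 2^m q^m$, so each term is at most $2^m q^{-(a+1)(m-a)+(b-a-1)}$ roughly. This gives a geometric series whose ratio is about $2q^{-(a+1)}$. Since $q\geq 2^{2(a+2)}\gg 2^{a+1}$, the total is well below $1$. I would write $\Phi\leq 2^{a+2}/q<1$, carefully checking the exponent bookkeeping. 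The series manipulations are legitimate because $(q+1)q^{-(a+2)}<1$ gives absolute convergence, and all terms are nonnegative, so dropping the fractional part is exact. Hence $U(a,b)=(\text{multiple of }q)+\binom ab$.

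Finally, I need $\binom ab<q$ for the residue to equal $\binom ab$. This holds since $\binom ab\leq 2^a<q$.

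For membership in $S$: $q=2^{2(a+2)}$ is immediate from the generators. By Lemma~\ref{lem:multiplication}, the numerator $q^{(a+1)^2+b+1}=2^{2(a+2)((a+1)^2+b+1)}$ is $2^x$ of a polynomial term, hence in $S$. The denominator $q^{a+2}-q-1$ contains a subtraction, which I would rewrite without it. One option is $\pred$ applied to $2^{2(a+2)^2}-2^{2(a+2)}$. That still subtracts, so instead I would use the quotient
\[
  q^{a+2}-q=q\bigl(q^{a+1}-1\bigr)=q\,\pred\bigl(q^{a+1}\bigr),
\]
followed by one more application of $\pred$ from Lemma~\ref{lem:basic}. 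Both pieces are products of $S$-terms. This shows $U$ is an $S$-term.
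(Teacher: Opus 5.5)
Your proposal is correct and is essentially the paper's argument. Your geometric series in $(q+1)q^{-(a+2)}$ is exactly the paper's expansion of $1/(1-z^{a+1}-z^{a+2})=\sum_\ell(z^{a+1}+z^{a+2})^\ell$ at $z=1/q$. You use the same case split $\ell<a$, $\ell=a$, $\ell>a$ and the same denominator term $\pred(q\,\pred(q^{a+1}))$.

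One bookkeeping slip needs fixing. For $m>a$ the top exponent is $(b-a-1)-(a+1)(m-a-1)$, not $-(a+1)(m-a)+(b-a-1)$. With the corrected exponent, the geometric bound still keeps the fractional part below $2^{a+3}/q<1$.
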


\begin{proof}
Every power $q^v$ displayed here is $2^{2(a+2)v}$ and belongs to $S$.
The denominator is $\pred(q\pred(q^{a+1}))$, so $U\in S$.
Let $c_j$ be the coefficient of $z^j$ in
\[
 \frac1{1-z^{a+1}-z^{a+2}}
 =\sum_{\ell\geq0}(z^{a+1}+z^{a+2})^\ell,
 \qquad m=a(a+1)+b.
\]
At degree $m$, only $\ell=a$ contributes: if $\ell<a$, the degree is at
most $(a-1)(a+2)<m$, and if $\ell>a$, it is at least $(a+1)^2>m$.
This also covers $a=0$, when the first alternative is empty. Thus
$c_m=\binom ab$.
For every $j\geq0$,
\[
 c_j\leq\sum_{\ell=0}^{\lfloor j/(a+1)\rfloor}2^\ell
 <2^{\lfloor j/(a+1)\rfloor+1}.
\]
For $j\leq m$, this gives $c_j<2^{a+1}<q$. For $s\geq1$,
$\lfloor(m+s)/(a+1)\rfloor\leq a+s$, so
\[
 0\leq\sum_{s\geq1}c_{m+s}q^{-s}
 <\frac{2^{a+2}}{q-2}<1.
\]
The real quotient in~\eqref{eq:binomial} is
$\sum_{j\geq0}c_jq^{m-j}$. Its floor is therefore
$\sum_{j=0}^m c_jq^{m-j}$, whose residue modulo $q$ is $c_m$.
\end{proof}

\begin{lemma}[An exponentiated greatest common divisor]\label{lem:gcd}
The function $\Gamma(a,b)=2^{\gcd(a,b)}$ belongs to $S$, with
$\gcd(0,0)=0$.
\end{lemma}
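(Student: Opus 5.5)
The plan is to produce $\gcd(a,b)$ as the lowest base-$2^L$ digit of an $S$-term, and then to exponentiate that digit with the encoded remainder $F(x,y)=2^{x\bmod y}$ of Lemma~\ref{lem:basic}. The modulus used there will be a power of two. The starting point is the counting identity
\[
  \gcd(a,b)=\#\{0\le k<a:\ a\mid kb\}\qquad(a\ge1),
\]
so $\gcd(a,b)$ is the coefficient of $X^0$ in $\sum_{k=0}^{a-1}X^{kb\bmod a}$. I would realize this polynomial numerically with $X=2^L$, for a spacing $L$ such as $L=a+b+2$, so that every coefficient (at most $a$) stays below $2^L$ and no carries occur.

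First I form the geometric sum
\[
  Z=\left\lfloor\frac{2^{Lab}-1}{2^{Lb}-1}\right\rfloor=\sum_{k=0}^{a-1}2^{Lkb}.
\]
This is an $S$-term: numerator and denominator are $\pred$ of powers of two, and the exponents are products, available by Lemma~\ref{lem:multiplication}. Reducing the exponents $Lkb$ modulo $La$ would give $\sum_k 2^{L(kb\bmod a)}$, i.e.\ $Z\bmod(2^{La}-1)$. That remainder is not available in $S$, so I instead fold the blocks by multiplication and a floor. Writing $Z$ in base $2^{La}$ uses $m=b$ blocks. Multiply by the block-repunit
\[
  G=\left\lfloor\frac{2^{Lab}-1}{2^{La}-1}\right\rfloor=\sum_{i=0}^{b-1}2^{Lai}
\]
and put $X=\bigl\lfloor ZG/2^{La(b-1)}\bigr\rfloor$. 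The base-$2^{La}$ block of $ZG$ at position $b-1$ is the sum of all blocks of $Z$, which is exactly $\sum_k 2^{L(kb\bmod a)}$. This holds provided no carries arise. At each of the $a$ sub-positions of width $L$, the sum collects at most $a$ ones, so carries are ruled out by $a<2^L$. Blocks below position $b-1$ likewise carry nothing upward, since each is a partial sum of blocks with the same digit bound. Hence the floor division discards the lower blocks cleanly, and $X\bmod 2^L=\gcd(a,b)$. Therefore
\[
  \Gamma(a,b)=F\bigl(X,\,2^L\bigr)
\]
for $a,b\ge1$, which is an $S$-term.

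The degenerate inputs are handled by case-splitting with the tests $\delta_0,\sg$ of Lemma~\ref{lem:basic}, which are available in $S$ along with multiplication and addition. If $a=0$ then $\gcd=b$, giving $2^b$; if $b=0$ then $\gcd=a$, giving $2^a$. For $a=b=0$ the value is $2^0=1$, consistent with either branch. The final term is a combination of the form $\sg(a)\sg(b)\,F(X,2^L)+\delta_0(a)2^b+\sg(a)\delta_0(b)2^a$. Some care is needed because $X$ must remain well-defined, or harmlessly wrong, when $a$ or $b$ is zero, and totalized division guarantees this.

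The main obstacle is the no-carry bookkeeping in the folding step. I must check that the digit bound holds simultaneously for $Z$, for every partial fold appearing in the lower blocks of $ZG$, and for the top folded block, and I must verify the block count $m=b$ precisely so that the correct block is isolated. I would verify this directly from the coefficient description of $ZG$ as $\sum_{i,k}2^{L(kb+ai)}$. Grouping its exponents by residue class gives each block's digits explicitly, in the same coefficient-extraction style as Lemma~\ref{lem:binomial}.
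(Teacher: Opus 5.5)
Your construction is correct and is essentially the paper's argument. In both, $\gcd(a,b)$ is a lattice-point count on a line, read off as one digit (in a power-of-two base) of a product of two geometric series built from quotients of $S$-terms, then exponentiated with $F$ and patched on the axes with $\delta_0$ and $\sg$ exactly as you propose. The only difference is in how the series is realized. The paper uses the single floor $G(a,b)=\bigl\lfloor 2^{h(ab+a+b)}/(\pred(2^{ha})\pred(2^{hb}))\bigr\rfloor$ of an infinite double series; its base-$2^h$ units digit counts $(i,j)\in\N^2$ with $ai+bj=ab$, equals $\gcd(a,b)+1$, and so needs a tail estimate and the final halving $\lfloor F(G(a,b),2^h)/2\rfloor$. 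Your exact finite sums, followed by the shift by $2^{La(b-1)}$, count $(k,i)\in\{0,\ldots,a-1\}\times\{0,\ldots,b-1\}$ with $kb+ai=a(b-1)$, which yields $\gcd(a,b)$ directly and needs only your no-carry check.
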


\begin{proof}
For positive $a,b$, put $h=2(ab+1)$, $q=2^h$, and
\[
 G(a,b)=\left\lfloor
 \frac{2^{h(ab+a+b)}}{\pred(2^{ha})\pred(2^{hb})}
 \right\rfloor\in S.
\]
The quotient before taking its floor equals
\[
 \frac{q^{ab}}{(1-q^{-a})(1-q^{-b})}
 =\sum_{i,j\geq0}q^{ab-ai-bj}.
\]
If $c_k$ counts the pairs $(i,j)\in\N^2$ satisfying $ai+bj=k$, then
$c_k\leq k+1$. Writing $m=ab$, the coefficients for $k\leq m$ are smaller
than $q=4^{m+1}$, and the fractional tail is bounded by
\[
 \sum_{s\geq1}(m+s+1)q^{-s}
 =\frac{m+1}{q-1}+\frac{q}{(q-1)^2}<1.
\]
For the last inequality, use $m\geq1$, $q\geq4(m+2)$, and $q\geq16$.
Consequently $G(a,b)\bmod q=c_{ab}$.
If $d=\gcd(a,b)$, the solutions at degree $ab$ are
\[
 i=\frac bd\,j',\qquad j=\frac ad(d-j'),\qquad 0\leq j'\leq d.
\]
Hence $c_{ab}=d+1$, and
\[
 \left\lfloor\frac{F(G(a,b),2^h)}2\right\rfloor=2^{\gcd(a,b)}.
\]
Call the term on the left $\Gamma_+(a,b)$, defined at all inputs by
our totalized operations. The single term
\[
 \delta_0(a)2^b+\sg(a)\delta_0(b)2^a
       +\sg(a)\sg(b)\Gamma_+(a,b)
\]
has the required value also on the axes and at $(0,0)$.
\end{proof}

\begin{lemma}[Lifting a visible residue]\label{lem:lift}
Let $U,Q,m\in S$, with $m>0$. There is a positive $S$-term $T$ such that
$T\equiv U\bmod Q\pmod m$.
\end{lemma}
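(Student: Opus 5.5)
The idea is to avoid subtraction altogether by replacing $-Q\lfloor U/Q\rfloor$ with a nonnegative multiple of $Q\lfloor U/Q\rfloor$ that is congruent to it modulo $m$. Since $m>0$, we have $m-1=\pred(m)\geq 0$ and $-1\equiv m-1\pmod m$. Both $\pred$ (Lemma~\ref{lem:basic}) and multiplication (Lemma~\ref{lem:multiplication}) are available in $S$. I would therefore take
\[
  T=U+\pred(m)\,Q\left\lfloor\frac UQ\right\rfloor+m ,
\]
which is an $S$-term because $U,Q,m\in S$ and $S$ is closed under substitution.

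\emph{Congruence.} Write $U\bmod Q=U-Q\lfloor U/Q\rfloor$, which is our convention in all cases. Then
\[
  T-(U\bmod Q)=Q\left\lfloor\frac UQ\right\rfloor+(m-1)Q\left\lfloor\frac UQ\right\rfloor+m
  =m\left(Q\left\lfloor\frac UQ\right\rfloor+1\right),
\]
which is divisible by $m$. The case $Q=0$ is covered automatically: there $\lfloor U/0\rfloor=0$, so $T=U+m$, and $U\bmod 0=U$ under our convention.

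\emph{Positivity.} The summand $m$ gives $T\geq m\geq 1$.

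There is no real obstacle here. The only points needing care are to check that every operation used is an $S$-generator or one of the previously established $S$-terms, and to confirm that the totalization conventions for division and remainder by zero agree. The subtraction $T-(U\bmod Q)$ appears only in the verification, not in the term $T$ itself.
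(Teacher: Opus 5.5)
Your proof is correct, but it takes a different route from the paper's.

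The paper sets $p=Q\lfloor U/Q\rfloor$ and $H=m(U+1)^2$. It then uses the identity $H^2=(H-p)(H+p)+p^2$, together with $p\leq U$ and hence $p^2<H$, to realize the exact difference $H-p$ as the quotient $\lfloor H^2/(H+p)\rfloor$. Its term $T=U+\lfloor H^2/(H+p)\rfloor$ therefore has the exact value $H+(U\bmod Q)$, where $H$ is a known multiple of $m$.

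You instead replace $-p$ by $(m-1)p$, using only $-1\equiv m-1\pmod m$. Your $T=U+\pred(m)\,p+m$ is thus tied to $U\bmod Q$ only modulo $m$.

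Your route is shorter and needs no size estimate. The paper's route gives an exact identity rather than a mere congruence. It is also the same subtraction-by-division device that reappears in Lemma~\ref{lem:padding}, where exact values and disjoint binary positions matter.

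The lemma's only use is in Theorem~\ref{thm:hamming}, which needs just $T\equiv\binom{2x}{x}\pmod{2^x}$ and $T>0$. There the two constructions are interchangeable.
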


\begin{proof}
Set $p=Q\lfloor U/Q\rfloor$ and $H=m(U+1)^2$. Then $0\leq p\leq U$
and $p^2<H$. Since
$H^2=(H-p)(H+p)+p^2$, the term
\[
 T=U+\left\lfloor\frac{H^2}{H+p}\right\rfloor
   =H+(U-p)=H+(U\bmod Q)
\]
is positive and has the claimed congruence. This includes $Q=0$.
\end{proof}

\begin{theorem}[Double Hamming exponential]\label{thm:hamming}
The function
\[
 D(x)=2^{2^{\HW(x)}}
\]
belongs to $S$.
\end{theorem}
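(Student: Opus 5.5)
The plan is to realize $2^{\HW(x)}$ as a greatest common divisor and then apply the exponentiated gcd of Lemma~\ref{lem:gcd}. The arithmetic fact behind this is Kummer's theorem. The $2$-adic valuation of $\binom{a+b}{a}$ equals the number of carries when $a$ and $b$ are added in base two. For $a=b=x$, every one bit of $x$ produces exactly one carry, since doubling is a shift. Hence
\[
  \nu_2\!\left(\binom{2x}{x}\right)=\HW(x).
\]
Since $\HW(x)\leq x\leq 2x$, this gives
\[
  \gcd\!\left(\binom{2x}{x},\,2^{2x}\right)=2^{\HW(x)}
\]
for every $x\in\N$. At $x=0$ both sides equal $1$.

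The difficulty is that $\binom{2x}{x}$ is not itself available as an $S$-term. Lemma~\ref{lem:binomial} only provides it as a residue: with $a=2x$ and $b=x$, the terms $U(2x,x)$ and $q=2^{2(2x+2)}$ are in $S$, and $U(2x,x)\bmod q=\binom{2x}{x}$. The gcd, however, depends on its first argument only modulo $2^{2x}$, so a congruent representative suffices. I would therefore apply Lemma~\ref{lem:lift} with $U=U(2x,x)$, $Q=q$, and modulus $m=2^{2x}$, which is positive and lies in $S$. This yields a positive $S$-term $T(x)$ with
\[
  T(x)\equiv\binom{2x}{x}\pmod{2^{2x}}.
\]
From this congruence,
\[
  \gcd\bigl(T(x),2^{2x}\bigr)=\gcd\!\left(\binom{2x}{x},2^{2x}\right)=2^{\HW(x)}.
\]
No remainder is ever computed numerically: the residue is used only inside the lifted term.

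Finally, I would set
\[
  D(x)=\Gamma\bigl(T(x),\,2^{2x}\bigr).
\]
This is an $S$-term by Lemma~\ref{lem:gcd}, and it equals $2^{\gcd(T(x),2^{2x})}=2^{2^{\HW(x)}}$. Because $T(x)>0$, the case $\gcd(0,0)$ never arises. At $x=0$ the value is $\Gamma(T,1)=2^1=2=2^{2^0}$, as required.

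The step I expect to need the most care is the Kummer valuation identity together with the boundary checks. One must confirm that the valuation $\HW(x)$ never exceeds $2x$, so that the power $2^{2x}$ does not truncate the gcd. One must also check that the hypotheses $0\leq b\leq a$ of Lemma~\ref{lem:binomial} hold for $(a,b)=(2x,x)$, including $x=0$. The remaining steps are direct substitutions of earlier lemmas.
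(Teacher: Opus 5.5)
Your proposal is correct and is essentially the paper's proof: extract $\binom{2x}{x}$ as a residue via Lemma~\ref{lem:binomial}, lift it to a positive $S$-term with Lemma~\ref{lem:lift}, and apply the exponentiated gcd of Lemma~\ref{lem:gcd}. The only differences are cosmetic: the paper uses the modulus $2^x$ rather than $2^{2x}$ (either works since $\HW(x)\leq x$), and it derives $\nu_2\binom{2x}{x}=\HW(x)$ from Legendre's factorial formula rather than from Kummer's carry count.
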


\begin{proof}
Lemma~\ref{lem:binomial}, with $a=2x$ and $b=x$, gives
$\binom{2x}{x}=U(x)\bmod Q(x)$ for $S$-terms $U,Q$.
Lemma~\ref{lem:lift}, with $m=2^x$, gives a positive $S$-term $T$ with
$T\equiv\binom{2x}{x}\pmod{2^x}$. Thus
\[
 \gcd(T,2^x)=\gcd\!\left(\binom{2x}{x},2^x\right)=2^{\HW(x)}.
\]
For the second equality, the binary expansion gives
$\sum_{j\geq1}\lfloor x/2^j\rfloor=x-\HW(x)$. Applying the factorial
valuation formula to $(2x)!/(x!)^2$ yields
$\nu_2\binom{2x}{x}=2\HW(x)-\HW(2x)=\HW(x)$.
Also $\HW(x)\leq x$, including $x=0$.
Lemma~\ref{lem:gcd} now gives
$\Gamma(T,2^x)=2^{2^{\HW(x)}}\in S$.
\end{proof}

\section{Hypercube normal form and elementary predicates}\label{sec:predicates}

The hypercube method converts bounded zero existence into a Hamming-weight
comparison; see Mazzanti~\cite{Mazzanti2002}, Marchenkov~\cite{Marchenkov2007},
and Prunescu and Sauras-Altuzarra~\cite[Section~3]{PS2024}.
Its usual construction uses subtraction. We retain the difference of two
$S$-terms until the last step, then replace it by an $S$-term with a known
Hamming-weight correction. This is essential: representing the exponential
of a residue does not by itself justify arbitrary substitution of hidden
numerical values.

\subsection{Differences of terms and binary padding}

In this section, saying that an integer-valued function is a difference of
$S$-terms means only an identity $A-B$ with $A,B\in S$; it does not assert
that the difference belongs to $S$.

\begin{lemma}[Signed representations]\label{lem:signed}
Differences of $S$-terms are closed under finite sums, differences, and
products. For every fixed $j\in\N$, if $t,v\in S$ and $v\geq1$, then
\[
 \sum_{a=0}^{t-1}a^j2^{va}
\]
is a difference of $S$-terms. Here $0^0=1$ and a sum with $t=0$ is zero.
\end{lemma}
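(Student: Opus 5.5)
The proof has two parts: closure of differences under ring operations, then a closed form for the weighted geometric sums.

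\emph{Closure.} Write the two functions as $A-B$ and $C-D$ with $A,B,C,D\in S$. Then
\[
 (A-B)+(C-D)=(A+C)-(B+D),\qquad (A-B)-(C-D)=(A+D)-(B+C),
\]
\[
 (A-B)(C-D)=(AC+BD)-(AD+BC).
\]
Each parenthesized expression lies in $S$ because $S$ is closed under addition and, by Lemma~\ref{lem:multiplication}, under multiplication. Every $S$-term $A$ is itself the difference $A-0$.

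\emph{The sums.} Put $w=2^v\ge2$ and $W=w-1\ge1$. The idea is to get a closed form for $\Sigma_j(t)=\sum_{a<t}a^jw^a$ and show it is a difference of $S$-terms, by induction on $j$.

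For $j=0$ we have $\Sigma_0=(w^t-1)/W$. The quotient is exact, so
\[
 \Sigma_0=\left\lfloor\frac{\pred(2^{vt})}{\pred(2^v)}\right\rfloor\in S,
\]
using $\pred$ from Lemma~\ref{lem:basic}. This also gives $0$ when $t=0$.

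For $j\ge1$, compare $\sum_{a<t}(a+1)^jw^{a+1}$ with $\sum_{a<t}a^jw^a$. This yields
\[
 W\,\Sigma_j=t^jw^t-\sum_{i<j}\binom ji w\,\Sigma_i .
\]
Iterating, $W^{j+1}\Sigma_j$ becomes an integer polynomial expression in $t$, $w^t$, $w$ and $W$ with fixed coefficients. By induction on $j$, $\Sigma_j=E_j/W^{j+1}$ for an explicit polynomial $E_j(t,w^t,w)$ with integer coefficients. Split $E_j=P_j-N_j$ into its nonnegative-coefficient parts; these are $S$-terms. Then
\[
 \Sigma_j=\frac{P_j-N_j}{W^{j+1}},
\]
and the division is exact, since $\Sigma_j$ is an integer.

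The obstacle is that a difference of $S$-terms cannot simply be divided, because the floor of $P/W^{j+1}$ minus the floor of $N/W^{j+1}$ may be off by one. Fix this with padding. Let $Z=W^{j+1}N_j$, an $S$-term. Then
\[
 \Sigma_j=\left\lfloor\frac{P_j+Z-N_j}{W^{j+1}}\right\rfloor-N_j .
\]
Here $P_j+Z-N_j\ge0$ and the division is still exact, but the numerator still contains a subtraction. Use instead
\[
 \left\lfloor\frac{P_j+Z}{W^{j+1}}\right\rfloor
 =\Sigma_j+N_j+\left\lfloor\frac{N_j}{W^{j+1}}\right\rfloor .
\]
This holds because
\[
 \frac{P_j+Z}{W^{j+1}}=\Sigma_j+\frac{N_j+Z}{W^{j+1}},
\]
and $Z$ is a multiple of $W^{j+1}$. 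Hence
\[
 \Sigma_j=\left\lfloor\frac{P_j+Z}{W^{j+1}}\right\rfloor
 -\left(N_j+\left\lfloor\frac{N_j}{W^{j+1}}\right\rfloor\right),
\]
which is a difference of two $S$-terms. I expect verifying exactness of this floor identity, and the degenerate case $t=0$, to be the delicate step.
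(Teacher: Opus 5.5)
Your proof is correct and follows essentially the paper's route: a closed form $\Sigma_j=E_j(t,2^{vt},2^v)/(2^v-1)^{j+1}$, a split of $E_j$ by coefficient signs, and separate division of the two parts. You obtain $E_j$ from the shift recurrence where the paper uses $(q\,d/dq)^j$.

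The padding by $Z$ is unnecessary, however, and the off-by-one obstacle you anticipate does not arise. Since $W^{j+1}$ divides $P_j-N_j$, the two parts are congruent modulo $W^{j+1}$, so $\lfloor P_j/W^{j+1}\rfloor-\lfloor N_j/W^{j+1}\rfloor=\Sigma_j$ exactly. This is the paper's formula. Your final expression reduces to it, because $\lfloor (P_j+Z)/W^{j+1}\rfloor=\lfloor P_j/W^{j+1}\rfloor+N_j$.
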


\begin{proof}
The first assertion follows by collecting signs, using in particular
\[
 (A-B)(C-D)=(AC+BD)-(AD+BC).
\]
For the second, write $q=2^v$ and formally differentiate the geometric sum:
\[
 \sum_{a=0}^{t-1}a^jq^a
 =\left(q\frac{d}{dq}\right)^j\frac{q^t-1}{q-1}
 =\frac{P_j(t,q,q^t)}{(q-1)^{j+1}}.
\]
There is a fixed polynomial $P_j\in\mathbb Z[t,q,Y]$. Explicitly,
$P_0=Y-1$, and the induction is
\[
 P_{j+1}=(q-1)\left(q\frac{\partial P_j}{\partial q}
                  +tY\frac{\partial P_j}{\partial Y}\right)
                  -(j+1)qP_j.
\]
This recurrence constructs a polynomial for each fixed $j$; no
differentiation or input-dependent iteration is an operation in $S$.
Split $P_j=P_j^+-P_j^-$ by its coefficient signs. The evaluations
$A=P_j^+(t,q,2^{vt})$, $B=P_j^-(t,q,2^{vt})$, and
$E=\pred(q)^{j+1}>0$ belong to $S$.
The quotient $(A-B)/E$ is an integer, so $A$ and $B$ have the same residue
modulo $E$. Therefore
\[
 \sum_{a=0}^{t-1}a^j2^{va}
 =\left\lfloor\frac AE\right\rfloor
  -\left\lfloor\frac BE\right\rfloor,
\]
which is the required representation.
\end{proof}

\begin{lemma}[Prescribed Hamming-weight correction]\label{lem:padding}
Suppose $A,B,u\in S$, $A\geq B$, and $u\geq1$. There is an $S$-term $V$
satisfying
\[
 \HW(V)=u+\HW(A-B).
\]
More explicitly, put
\begin{equation}\label{eq:padding}
 K=2(A+B+1),\qquad H=\pred(2^u)2^K,\qquad
 V=A+\left\lfloor\frac{H^2}{H+B}\right\rfloor.
\end{equation}
\end{lemma}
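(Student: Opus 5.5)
The plan is to compute $V$ exactly, using the same identity as in the proof of Lemma~\ref{lem:lift}: for $H>B^2$, $H^2=(H-B)(H+B)+B^2$ with $0\leq B^2<H+B$. This gives
\[
  \left\lfloor\frac{H^2}{H+B}\right\rfloor=H-B .
\]
Consequently $V=A+H-B=(A-B)+\pred(2^u)2^K$. Membership $V\in S$ is immediate, since $K$, $2^u$, $\pred$, products, sums and quotients of $S$-terms are all in $S$ by Lemma~\ref{lem:basic} and Lemma~\ref{lem:multiplication}.

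The first step is to verify the hypothesis $H>B^2$. Because $u\geq1$, we have $\pred(2^u)\geq1$, so $H\geq2^K=2^{2(A+B+1)}=4^{A+B+1}>(B+1)^2>B^2$, using $4^n>n^2$. This also gives $H+B>0$, so the totalized quotient agrees with the real floor.

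The second step is the Hamming-weight count. The number $\pred(2^u)2^K$ has exactly $u$ ones, in bit positions $K,\ldots,K+u-1$. The number $A-B$ satisfies $0\leq A-B\leq A<2^{A}<2^K$, so all its ones lie in positions below $K$. The two summands therefore have disjoint binary supports, the addition has no carries, and $\HW(V)=u+\HW(A-B)$.

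The only delicate point is the exact floor computation. Its remainder $B^2$ must be smaller than the divisor $H+B$, and the choice $K=2(A+B+1)$ is made generously to guarantee this as well as the bit separation. I expect no real obstacle beyond checking these inequalities. Note also that the case $B=0$ is covered: there $\lfloor H^2/H\rfloor=H$.
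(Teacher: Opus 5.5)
Your proposal is correct and is essentially the paper's own proof. Both arguments use $H\geq 2^K>B^2$ and the identity $H^2=(H-B)(H+B)+B^2$ to get $V=(2^u-1)2^K+(A-B)$, then use $0\leq A-B\leq A<2^K$ to conclude that the two summands have disjoint binary supports; your check $4^{A+B+1}>(B+1)^2$ simply spells out the inequality the paper states in one line.
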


\begin{proof}
We have $H\geq2^K>B^2$ and $0\leq A-B\leq A<2^K$.
The identity $H^2=(H-B)(H+B)+B^2$ gives
$\lfloor H^2/(H+B)\rfloor=H-B$, hence
\[
 V=(2^u-1)2^K+(A-B).
\]
The summands occupy disjoint binary positions, and the first has exactly
$u$ ones. This proves the result.
\end{proof}

\subsection{Bounded equations for elementary predicates}

\begin{lemma}\label{lem:bounded-equations}
Given an elementary predicate $P(\vec n)$, one can effectively construct a
fixed $k\geq1$, terms $t,u\in S$ with $t\geq2$, $u\geq1$, and a
nonnegative expression $P_0(\vec n,\vec a)$ such that
\begin{align}
 P(\vec n)&\ \Longleftrightarrow\
 \exists\vec a\in\{0,\ldots,t-1\}^k\quad P_0(\vec n,\vec a)=0,
 \label{eq:bounded-existence}\\
 0&\leq P_0(\vec n,\vec a)<2^u
 \quad\text{throughout that cube}.\label{eq:bounded-size}
\end{align}
Moreover, $P_0$ is a finite signed sum of expressions
\begin{equation}\label{eq:simple-monomial}
 c(\vec n)\prod_{i=1}^k a_i^{\gamma_i}2^{e_i a_i},
\end{equation}
where $c\in S$ is nonnegative and $\gamma_i,e_i\in\N$ are fixed.
\end{lemma}

\begin{proof}
By~\eqref{eq:remainder-basis}, the characteristic function of $P$ has a
finite term using addition, remainder, and base-two exponentiation. Replace
its intermediate values by variables and impose equations for each gate.
For addition and exponentiation, use the nonnegative expressions
$(c-a-b)^2$ and $(c-2^a)^2$. For a remainder gate $c=a\bmod b$, introduce
nonnegative variables $q,s$ and use
\begin{equation}\label{eq:mod-gate}
 \bigl(b^2+(c-a)^2\bigr)
 \bigl((a-qb-c)^2+(b-c-s-1)^2\bigr).
\end{equation}
When $b=0$, the second factor cannot vanish, and the first vanishes exactly
when $c=a$. When $b>0$, the first factor cannot vanish, and the second
requires $a=qb+c$ and $b=c+s+1$, equivalently $c=a\bmod b$.

Let $P_0$ be the sum of the gate expressions and the square of the final
output minus one. If necessary, add a dummy variable with its square to
ensure $k\geq1$. Each summand is nonnegative, so $P_0=0$ is equivalent to
a correct evaluation with final value one. All coefficients obtained by
expanding are signed integer combinations of input monomials and powers
$2^{n_i}$, and every witness exponential has the form $2^{a_i}$.
Expansion therefore gives~\eqref{eq:simple-monomial}, with coefficient
signs placed outside the nonnegative functions $c\in S$.

The witnesses admit an $S$-term bound. Start with
$B_0=2+C+\sum_i n_i$, where $C$ is the sum of all constants appearing in
the fixed term, and repeat $B_{j+1}=2^{2B_j}$ once for each gate, with
one additional step. These finitely many steps give a term bounding all
intermediate values. For $b>0$, the required $q,s$ are at most $a,b$,
respectively; for $b=0$, choose $q=s=0$. Choose $t\geq2$ strictly larger
than this bound. Conversely, any zero in the cube represents a correct
evaluation, proving~\eqref{eq:bounded-existence}.

Finally, in the expansion~\eqref{eq:simple-monomial}, replace all signs by
plus signs and every $a_i$ by $t$. The resulting $S$-term $M$ bounds
$P_0$ on the cube. Taking $u=M+1$ proves~\eqref{eq:bounded-size}.
All constructions are effective from the initial term, and the basis
translation is effective from a standard elementary description.
\end{proof}

\subsection{The normal form and extraction of truth values}

\begin{theorem}[Hypercube normal form]\label{thm:hypercube}
For every elementary predicate $P(\vec n)$, one can effectively construct
$S$-terms $V,u,N$, with $u\geq1$, and a nonnegative integer-valued function
$d$ such that
\begin{equation}\label{eq:hypercube-normal}
 \HW(V)=u(N+d),\qquad P(\vec n)\ \Longleftrightarrow\ d>0.
\end{equation}
The function $d$ may be taken to count the zeros of the expression in
Lemma~\ref{lem:bounded-equations} in its bounded cube; no term for $d$ is
asserted.
\end{theorem}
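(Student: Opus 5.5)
The plan is to run the standard hypercube argument, tracking signs so that every integer that appears is a difference of $S$-terms. Only at the end will Lemma~\ref{lem:padding} turn that difference into a genuine $S$-term $V$.

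\emph{Step 1 (bounded equation).} Apply Lemma~\ref{lem:bounded-equations} to $P$. This gives $k$, the terms $t\geq2$ and $u\geq1$, and the signed expression $P_0(\vec n,\vec a)$ with $0\leq P_0<2^u$ on the cube $\{0,\ldots,t-1\}^k$. Put $d(\vec n)=\#\{\vec a:P_0(\vec n,\vec a)=0\}$, so that $P(\vec n)\Leftrightarrow d>0$.

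\emph{Step 2 (layout of blocks).} Index the cube points by $\iota(\vec a)=\sum_i a_it^{i-1}$. Assign to each point a binary block of width $w=cu$, for a small fixed $c$ (I would try $c=2$ or $3$), and set $v_i=w\,t^{i-1}$, which is an $S$-term. Then $2^{w\iota(\vec a)}=\prod_i2^{v_ia_i}$. The number encoding the cube is a sum $X=\sum_{\vec a}\Phi(P_0(\vec a))\,2^{w\iota(\vec a)}$. Here $\Phi(x)$ should be a polynomial expression in $x$, $2^u$ and $u$ that satisfies two conditions for every $0\leq x<2^u$. First, $0\leq\Phi(x)<2^w$, so that the blocks do not interact. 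Second, $\HW(\Phi(x))=u(1+[x=0])$, or more generally $u(\text{const}+[x=0])$. Once $P_0$ is expanded into monomials of the form~\eqref{eq:simple-monomial}, every summand of $X$ becomes a product over $i$ of sums $\sum_{a_i<t}a_i^{\gamma}2^{(e_i+v_i)a_i}$ times an $S$-coefficient. Lemma~\ref{lem:signed} makes each such factor a difference of $S$-terms, and the closure part of the same lemma then gives $X=A-B$ with $A,B\in S$ and $A\geq B$. Summing block weights yields $\HW(X)=u\,(N_0+d)$, where $N_0$ is the constant term multiplied by $t^k$, which is an $S$-term.

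\emph{Step 3 (padding).} Apply Lemma~\ref{lem:padding} to $A,B,u$. This produces $V\in S$ with $\HW(V)=u+\HW(X)=u(N+d)$ for $N=N_0+1$, which is exactly~\eqref{eq:hypercube-normal}.

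The main obstacle is the choice of the block function $\Phi$. It must be a fixed polynomial (with signs) in $x=P_0$ and $2^u$, so that Step 2 reduces to Lemma~\ref{lem:signed}. Its Hamming weight must also jump by exactly $u$ when $x=0$ and stay constant for all $1\leq x<2^u$. Complementation identities such as $\HW(2^u-1-x)=u-\HW(x)$ do not work on their own, because they cancel the dependence on $x$ completely. The first thing I would try is a carry-based block: place $2^u-1+x$ in a field of width $u+1$, so that the top bit records $[x\geq1]$. I would then combine this with a second copy of the block, shifted and subtracted (still a difference of $S$-terms), so that the low bits cancel pairwise and the top bit is spread into a run of $u$ ones. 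If this cannot be arranged polynomially, my fallback is to let $d$ count zeros weighted through $P_0$ replaced by $P_0^2$ or by $2^uP_0$, and to check whether a widened block gives a constant weight on the nonzero values.
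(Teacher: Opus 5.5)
\textbf{Verdict: the proposal has a genuine gap.} The block function $\Phi$, which you identify as the main obstacle and which is the only non-routine ingredient of the theorem, is never produced. The surrounding bookkeeping matches the paper: the cube indexing, the reduction of each block sum to $\sum_{a_i<t}a_i^{\gamma_i}2^{(e_i+v_i)a_i}$ and hence to Lemma~\ref{lem:signed}, $A\geq B$ from $X\geq0$, and the final call to Lemma~\ref{lem:padding} with $N=N_0+1$. You state the requirements on $\Phi$ correctly: a fixed signed polynomial in $x$ and $2^u$, with $0\leq\Phi(x)<2^w$ and $\HW(\Phi(x))=u(c+\boldsymbol1_{x=0})$ for $0\leq x<2^u$. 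But you only list things you would try, so the identity $\HW(X)=u(N_0+d)$ in Step~2 is unsupported. The fallbacks do not help: replacing $P_0$ by $P_0^2$ or $2^uP_0$ leaves the zero set, and hence $d$, unchanged. It also does nothing to make the weight constant on the nonzero values.

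The missing gadget is the paper's $\Delta(x,u)=(2^u-1)(2^u+1-x)$ with block width $w=2u$. It completes your carry idea, except that the second copy is the \emph{complement}, shifted and \emph{added}:
\[
 (2^u-1)(2^u+1-x)=(2^u-1+x)+2^u(2^u-1-x).
\]
For $1\leq x<2^u$, the low field $2^u-1+x=2^u+(x-1)$ carries one unit into the high field, giving $(2^u-x)2^u+(x-1)$. The two $u$-bit blocks sum to $2^u-1$, so they are bitwise complements and the weight is exactly $u$, independent of $x$. For $x=0$ there is no carry, the value is $2^{2u}-1$, and the weight is $2u$.

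So complementation is the right tool after all. The cancellation of the $x$-dependence that you objected to is exactly what is wanted for $x\geq1$. The offset by one (pairing $2^u-x$ with $x-1$ rather than $2^u-1-x$ with $x$) makes the pair fail to be two valid $u$-bit words only at $x=0$.

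Since $\Delta(P_0,u)=2^{2u}-2^uP_0+P_0-1$ is linear in $P_0$ with $S$-term coefficients, your Step~2 then goes through verbatim with $v_i=2ut^{i-1}$. This gives $\HW(X)=u(t^k+d)$ and $N=t^k+1$.
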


\begin{proof}
Use $P_0,t,u,k$ from Lemma~\ref{lem:bounded-equations}, and let $d$ count
the zeros of $P_0$ in the cube. For $0\leq A<2^u$, set
\[
 \Delta(A,u)=(2^u-1)(2^u-A+1).
\]
If $A=0$, this is $2^{2u}-1$, with Hamming weight $2u$. If $A>0$, then
\[
 \Delta(A,u)=(2^u-A)2^u+(A-1),
\]
and the two $u$-bit blocks are complements, so the Hamming weight is $u$.
In all cases $0\leq\Delta(A,u)<2^{2u}$.
Consider the integer
\begin{equation}\label{eq:raw-hypercube}
 W=\sum_{0\leq a_1,\ldots,a_k<t}
 2^{2u(a_1+ta_2+\cdots+t^{k-1}a_k)}
 \Delta(P_0(\vec n,\vec a),u).
\end{equation}
The block indices enumerate $0,\ldots,t^k-1$. Thus the blocks do not
overlap, and
\begin{equation}\label{eq:raw-weight}
 \HW(W)=u(t^k+d).
\end{equation}

We first prove only that $W$ is a difference of two $S$-terms.
Expand $\Delta(P_0,u)=2^{2u}-2^uP_0+P_0-1$ and use the finite monomial
expansion in~\eqref{eq:simple-monomial}. Each sum in
\eqref{eq:raw-hypercube} then separates into a coefficient in $S$ times
a product of sums
\[
 \sum_{a_i=0}^{t-1}a_i^{\gamma_i}
      2^{(2ut^{i-1}+e_i)a_i}.
\]
The coefficient $2ut^{i-1}+e_i$ is a positive $S$-term. Lemma~\ref{lem:signed}
therefore gives $S$-terms $A,B$ with $W=A-B$. Since $W\geq0$, we have
$A\geq B$.
Apply Lemma~\ref{lem:padding} with this pair and the same $u$. Its term $V$
satisfies
\[
 \HW(V)=u+\HW(W)=u(t^k+1+d).
\]
Set $N=t^k+1\in S$. This is~\eqref{eq:hypercube-normal}, and
$P(\vec n)\Longleftrightarrow d>0$ follows from
\eqref{eq:bounded-existence}. All finite sums used to define $W$ have been
eliminated by Lemma~\ref{lem:signed}; the final construction is a fixed term.
\end{proof}

\begin{proof}[Proof of Theorem~\ref{thm:predicates}]
Take $V,u,N,d$ from Theorem~\ref{thm:hypercube}. By
Theorem~\ref{thm:hamming}, $D(V)=2^{2^{\HW(V)}}$ belongs to $S$.
Since $u\geq1$,
\[
 d>0\ \Longleftrightarrow\ \HW(V)>uN
       \ \Longleftrightarrow\ D(V)>2^{2^{uN}}.
\]
Consequently an $S$-term for the characteristic function is
\begin{equation}\label{eq:predicate-term}
 \chi_P(\vec n)=\sg\!\left(
 \left\lfloor\frac{D(V(\vec n))}{2^{2^{u(\vec n)N(\vec n)}}+1}
 \right\rfloor\right).
\end{equation}
If $d=0$, the numerator equals the denominator minus one. If $d>0$, it
is an integer strictly larger than that value, so the quotient is positive.
This proves $\Pred(\E)\subseteq\Pred(S)$. The reverse inclusion follows
from $S\subseteq\E$, established in Section~\ref{sec:separation}.
Every step was effective.

If an elementary $f$ has range contained in $\{0,\ldots,M\}$ for a fixed
constant $M$, each predicate $f=j$ has an $S$-characteristic term, and
\[
 f(\vec n)=\sum_{j=0}^M j\chi_{\{f=j\}}(\vec n)\in S.
\]
The sum has a fixed number of summands, so it is a term operation.
\end{proof}

\begin{remark}[Predicates versus numerical outputs]\label{rem:boundary}
Theorems~\ref{thm:main} and~\ref{thm:predicates} give
\[
 S\subsetneq\E,\qquad \Pred(S)=\Pred(\E).
\]
In particular, the predicates $z=x\bmod y$ and $x=2^z$ have characteristic
functions in $S$, although remainder and a decoder satisfying $T(2^n)=n$
do not belong to $S$. Predicate completeness permits arbitrary elementary
substitution at the level of truth values. It does not supply a term for
bounded search, a varying-length sum of characteristic values, or the
numerical witness to a true predicate. Nor does the proof establish
single-exponential closure for nested remainders; the direct construction
in Remark~\ref{rem:nesting} yields the double exponential
\eqref{eq:nested-encoding}. In contrast, the uniform variable-base
encoding of Lemma~\ref{lem:uniform-power-encoding} is closed under all
elementary substitutions, including nested remainders, and
Lemma~\ref{lem:geometric-recovery} converts those encodings to numerical
outputs. Thus predicate completeness of $S$ and function completeness of
$\mathcal T$ follow from different recovery mechanisms.
\end{remark}

\section*{AI Assistance Disclosure}
OpenAI's GPT-6.1 Sol assisted in solving the fixed-base open problem addressed in
this paper. The author presented the problem to the AI agent together
with private research findings and partial results. The agent initially
failed to resolve the problem. After the author suggested that the
principal obstruction might be the inability to lower exponents, the
agent developed the sparsity argument that led to the negative solution
proved here.

In subsequent work with GPT-6 Astra, the author proposed replacing
fixed-base exponentiation by the binary operation $x^y$. The suggestion
returned to an earlier attempt at a quotient basis, using hypercube
encoding inside of an exponent and a favorable rescaling before lowering the exponent via a geometric-series quotient.
 Following that direction, the agent developed
the uniform power-encoding argument, the geometric-series construction of
an encoded quotient, and the geometric recovery lemma. These three steps
prove the variable-base basis theorem. That proof proceeds without the
proposed hypercube and without binomial extraction. The agent also
extended the sparsity theorem uniformly to every fixed base, checked the
new constructions and their boundary cases, and incorporated the results
into the manuscript. The author supplied the original problem and its
research context, the earlier constructions, the exponent-lowering
diagnosis behind the negative result, and the variable-base and rescaling
direction behind the positive result.

\end{document}